\theoremstyle{plain}
\newtheorem{theorem}{Theorem}[section]
\newtheorem{lemma}[theorem]{Lemma}
\newtheorem{claim}[theorem]{Claim}
\newtheorem{corollary}[theorem]{Corollary}
\newtheorem{proposition}[theorem]{Proposition}
\newtheorem{question}[theorem]{Question}
\newtheorem{problem}[theorem]{Problem}
\newtheorem{fact}[theorem]{Fact}
\newcommand{\thistheoremname}{}
\newtheorem*{genericthm*}{\thistheoremname}
\newenvironment{namedthm*}[1]
{\renewcommand{\thistheoremname}{#1}%
	\begin{genericthm*}}
	{\end{genericthm*}}
\theoremstyle{definition}\newtheorem{definition}[theorem]{Definition}
\theoremstyle{definition}
\theoremstyle{definition}\newtheorem{remark}[theorem]{Remark}
\newcommand{\mc}{\mathcal}
\newcommand{\Q}{\mathbb{Q}}
\newcommand{\N}{\mathbb{N}}
\newcommand{\Z}{\mathbb{Z}}
\DeclareMathOperator{\hp}{Homeo^+}
\DeclareMathOperator{\fix}{Fix}
\DeclareMathOperator{\aut}{Aut}
\DeclareMathOperator{\id}{id}
\DeclareMathOperator{\conv}{conv}
\DeclareMathOperator{\ACL}{ACL}
\DeclareMathOperator{\dom}{dom}
\DeclareMathOperator{\ran}{ran}
\newcommand{\homeo}{Homeo}
\DeclareMathOperator{\proj}{proj}
\newcommand{\nr}{\lnot R}
\begin{document}
	
	\title{The structure of random automorphisms of countable structures}
	\author[U. B. Darji]{Udayan B. Darji}
	\address{Department of Mathematics, University of Louisville,
		Louisville, KY 40292, USA\\Ashoka University, Rajiv Gandhi Education City, Kundli, Rai 131029, India} 
	\email{ubdarj01@louisville.edu}
	\urladdr{http://www.math.louisville.edu/\!$\tilde{}$ \!\!darji}
	\author[M. Elekes]{M\'arton Elekes}
	\address{Alfr\'ed R\'enyi Institute of Mathematics, Hungarian Academy of Sciences,
		PO Box 127, 1364 Budapest, Hungary and E\"otv\"os Lor\'and
		University, Institute of Mathematics, P\'azm\'any P\'eter s. 1/c,
		1117 Budapest, Hungary}
	\email{elekes.marton@renyi.mta.hu}
	\urladdr{www.renyi.hu/ \!$\tilde{}$ \!\!emarci}
	
	\author[K. Kalina]{Kende Kalina}
	\address{E\"otv\"os Lor\'and
		University, Institute of Mathematics, P\'azm\'any P\'eter s. 1/c,
		1117 Budapest, Hungary}
	\email{kkalina@cs.elte.hu}
	\author[V. Kiss]{Viktor Kiss}
	\address{Alfr\'ed R\'enyi Institute of Mathematics, Hungarian Academy of Sciences,
		PO Box 127, 1364 Budapest, Hungary and E\"otv\"os Lor\'and
		University, Institute of Mathematics, P\'azm\'any P\'eter s. 1/c,
		1117 Budapest, Hungary}
	\email{kiss.viktor@renyi.mta.hu}
	
	\author[Z. Vidny\'anszky]{Zolt\'an Vidny\'anszky}
	\address{Kurt G\"{o}del Research Center for Mathematical Logic, 
		Universit\"{a}t Wien,
		W\"{a}hringer Stra{\ss}e 25, 
		1090 Wien,
		Austria and Alfr\'ed R\'enyi Institute of Mathematics, Hungarian Academy of Sciences,
		PO Box 127, 1364 Budapest, Hungary}
	\email{zoltan.vidnyanszky@univie.ac.at}
	\urladdr{
		http://www.logic.univie.ac.at/~vidnyanszz77/
	}
	\subjclass[2010]{Primary 03E15, 22F50; Secondary 03C15, 28A05, 54H11, 28A99}
	\keywords{Key Words: non-locally compact Polish group, Haar null, Christensen, shy,
		prevalent, typical element, automorphism group, compact catcher, Truss, amalgamation, random automorphism, conjugacy class} 
	
	\thanks{The second, fourth and fifth authors were partially supported by the
		National Research, Development and Innovation Office
		-- NKFIH, grants no.~113047, no.~104178 and no. ~124749. The fifth
		author was also supported by FWF Grant P29999.}
	\begin{abstract}
		In order to understand the structure of the ``typical'' element of an automorphism group, one has to study how large the conjugacy classes of the group are. When typical is meant in the sense of Baire category, a complete description of the size of the conjugacy classes has been given by Kechris and Rosendal. Following Dougherty and Mycielski we investigate the measure theoretic dual of this problem, using Christensen's notion of Haar null sets. When typical means random, that is, almost every with respect to this notion of Haar null sets, the behavior of the automorphisms is entirely different from the Baire category case. 
		In this paper, we generalize the theorems of Dougherty and Mycielski about $S_\infty$ to arbitrary automorphism groups of countable structures isolating a new model theoretic property, the \emph{Cofinal Strong Amalgamation Property}.  As an application we show that a large class of automorphism groups can be decomposed into the union of a meager and a Haar null set.

	\end{abstract}
	\maketitle
	
	\section{Introduction}

	The study of typical elements of Polish groups is a flourishing field with a 
	large number of applications. The systematic investigation of typical elements of automorphism groups of countable structures was initiated by Truss \cite{truss1992generic}. He conjectured that the existence of a co-meager conjugacy class can be characterized in model theoretic terms. This question was answered affirmatively by Kechris and Rosendal \cite{KechrisRosendal}. They, extending the work of Hodges, Hodkinson, Lascar and Shelah \cite{shelah} also investigated the relation between the existence of co-meager conjugacy classes in every dimension and other group theoretic properties, such as the small index property, uncountable cofinality, automatic continuity and Bergman's property. 
	
	The existence and description of typical elements frequently have applications in the theory of dynamical systems as well. For example, it is easy to see that the automorphism group of the countably infinite atomless Boolean algebra is isomorphic to the homeomorphism group of the Cantor set, which is a central object in dynamics. Thus, from their general results Kechris and Rosendal deduced the existence of a co-meager conjugacy class in the homeomorphism group of the Cantor set. A description of an element with such a class was given by Glasner and Weiss \cite{glasner2003universal}  and from a different perspective by Bernardes and the first author \cite{bernardes}.
	
	Thus, it is natural to ask 
	whether there exist measure theoretic analogues of these results. 
	Unfortunately, on non-locally compact groups there is no natural invariant 
	$\sigma$-finite measure. However, a generalization of the ideal of measure zero 
	sets can be defined in every Polish group as follows:
	
	\begin{definition}[Christensen, \cite{originalhaarnull}]
		\label{d:haarnull}
		Let $G$ be a 
		Polish group and $B \subset G$ be Borel. We say that $B$ is \textit{Haar null} if 
		there exists a
		Borel probability measure $\mu$ on $G$ such that for every $g,h \in G$
		we have $\mu(gBh)=0$. 
		An arbitrary set $S$ is called Haar null if $S \subset B$ for some Borel Haar 
		null set $B$.
	\end{definition}
	
	It is known that the collection of Haar null sets forms a $\sigma$-ideal in 
	every Polish group (see \cite{cohen} and \cite{mycorig}) and it coincides with the ideal of measure zero sets in 
	locally compact groups with respect to every left (or equivalently right) Haar measure. 
	Using this definition, it makes sense to talk about the properties of random 
	elements of a Polish group. A property $P$ of elements of a Polish group $G$ is said 
	to \textit{hold almost surely} or \emph{almost every element of G has property 
		$P$} if the set
	$\{g \in G: g \text{ has property } P\}$ is co-Haar null.

	Since we are primarily interested in homeomorphism and automorphism groups, and in such groups conjugate elements can be considered isomorphic, we are only 
	interested in the conjugacy invariant properties of the elements of our Polish groups. Hence, in order to describe  the random element, 
	one must give a complete description of the size of the conjugacy classes with 
	respect to the Haar null ideal. The investigation of this question has been 
	started by Dougherty and Mycielski \cite{DM} in the permutation group of a 
	countably infinite set, $S_\infty$. If $f \in S_\infty$ and $a$ is an element 
	of the underlying set then the set $\{f^{k}(a):k \in \mathbb{Z}\}$ is called 
	the \textit{orbit of $a$ (under $f$)}, while the cardinality of this set is 
	called \textit{orbit length}. Thus, each $f \in S_\infty$ has a collection of 
	orbits (associated to the elements of the underlying set). It is easy to 
	show that two elements of $S_\infty$ are conjugate if and only if they have the 
	same (possibly infinite) number of orbits for each possible orbit length. 
	
	\begin{theorem}[Dougherty, Mycielski, \cite{DM}] Almost every element of 
		$S_\infty$ has 
		infinitely many infinite orbits and only finitely many finite ones.
	\end{theorem}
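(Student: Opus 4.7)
The plan is to prove the two halves of the statement separately by producing, for each, a Borel probability measure on $S_\infty$ witnessing Haar nullness of the corresponding bad set. Write
\[
B_1 = \{f \in S_\infty : f \text{ has only finitely many infinite orbits}\}, \quad B_2 = \{f \in S_\infty : f \text{ has infinitely many finite orbits}\};
\]
both sets are Borel and conjugation invariant, and I would construct measures $\mu_1, \mu_2$ on $S_\infty$ with $\mu_i(gB_ih) = 0$ for every $g, h \in S_\infty$, which by Definition~\ref{d:haarnull} suffices.

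I would build each $\mu_i$ as the push-forward, under a continuous map $\Phi_i$, of a product probability measure on a Cantor-like parameter space, so that $\mu_i$ is supported on a compact set of ``generic'' permutations and the coordinates of the parameter translate into genuine probabilistic independence. For $\mu_2$ I would arrange that every permutation in the support is itself outside $B_2$: a concrete candidate is to identify $\N$ with $\Z$ and parametrize ``shifts with random internal swap'', mapping the pair $\{2n,2n+1\}$ to $\{2n+2,2n+3\}$ either straight or swapped depending on one bit of the parameter, so that each such permutation strictly moves every integer forward and therefore has no finite orbit at all. For $\mu_1$ I would replace $\Z$ by a partition $\N = \bigsqcup_j A_j$ into infinite pieces and perform such a shift construction on each $A_j$ independently, so that every permutation in the support has infinitely many infinite orbits by design.

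The core of the proof is then to verify that, for arbitrary fixed $g,h \in S_\infty$, the random permutation $f_x = g\Phi_i(x)h$ still avoids $B_i$ almost surely. For $B_2$, I would estimate for each $a \in \N$ and each $L \ge 1$ the probability that $f_x^L(a) = a$: tracking which parameter coordinates of $x$ are read during the $L$ iterations of $f_x$ starting at $a$, each length-$L$ return event has a probability that decays in $L$, and a Borel--Cantelli argument with an appropriate pooling of starting points $a$ gives that only finitely many $a$ lie on finite orbits of $f_x$ almost surely. For $B_1$ the dual argument uses independence across the pieces $A_j$ to show that, with full measure, no finite list of candidate orbits can exhaust all but finitely many points of $\N$ under $f_x$.

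The principal difficulty is that $g$ and $h$ are completely arbitrary and so the structure of $f_x$ depends on them in an uncontrolled way, whereas the probabilistic estimates must be uniform in $g, h$. This forces $\Phi_i$ to be designed so that the randomness is dispersed over infinitely many mutually independent degrees of freedom, and so that each iteration of $f_x$ consumes essentially fresh randomness regardless of how $g$ and $h$ shuffle the indexing. The technical heart of the argument is exactly this combinatorial bookkeeping — which coordinates of the parameter are consumed by the orbit of a given starting point, and how different starting points share or avoid sharing coordinates — and the naive Bernoulli construction sketched above may have to be replaced by one using finite permutations of growing size in order to give the estimates sufficient uniformity. Once the correct $\Phi_i$ is in place, both halves of the theorem follow from standard Borel--Cantelli--type arguments.
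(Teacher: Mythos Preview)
Your outline is a viable route---indeed it is close in spirit to the original Dougherty--Mycielski argument---but it differs from what this paper does, and it leaves the decisive estimate unfinished.

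One simplification you state but do not exploit: since $B_1$ and $B_2$ are conjugacy invariant, one-sided translates suffice. Writing $gBh = gh\cdot(h^{-1}Bh) = gh\cdot B$, any $\mu$ witnessing $\mu(Bh)=0$ for all $h$ already witnesses Haar nullness; this is Lemma~\ref{l:conjugacyinvariant} in the paper, and the paper uses it throughout, checking only that $ph$ is good for every $h$. This would spare you the two-sided bookkeeping you flag as the ``principal difficulty''.

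The paper's construction is also quite different from your shift-with-swap idea. Rather than two explicit product measures, it builds a \emph{single} random permutation $p$ by a back-and-forth extension process: at stage $i$ one extends the finite partial permutation $p_{i-1}$ by assigning images (odd $i$) or preimages (even $i$) of a prescribed set $S_i$, each chosen uniformly from the first $N_i$ legal values. The key technical lemma (Lemma~\ref{l:badelements}) shows that $N_i$ can be taken so large that any fixed $K$-element set is avoided at stage $i$ with probability at least $1-2^{-i}$. One then calls an event \emph{bad} if two partial paths of $ph$ merge or a cycle closes; the lemma plus Borel--Cantelli give that almost surely only finitely many bad events ever occur, which yields both halves of the theorem at once (Proposition~\ref{p:F and C co-Haar null} and Theorem~\ref{t:DM}). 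The payoff of this design is that the randomness lives in the choice of \emph{new} images/preimages, so ``fresh randomness at each step'' is automatic and uniform in $h$; moreover the argument works verbatim for any closed $G\le S_\infty$ with the FACP, which is the paper's actual objective.

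The genuine gap in your proposal is that you never carry out the probability estimate for $(gf_xh)^L(a)=a$, and you yourself concede the Bernoulli-shift construction may need to be replaced. Without a concrete $\Phi_i$ for which that estimate goes through summably, the Borel--Cantelli step is not established; the paper's extension process is precisely a mechanism that makes this estimate trivial.
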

	Therefore, almost all permutations belong to the union of a countable set of 
	conjugacy classes.
	\begin{theorem}[Dougherty, Mycielski, \cite{DM}] 
		\label{t:DMold} All of these countably many conjugacy classes are non-Haar null.
	\end{theorem}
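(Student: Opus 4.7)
My plan is to establish the stronger \emph{two-sided compact catching} property: for each such conjugacy class $C$ and every compact set $K \subseteq S_\infty$, there exist $g, h \in S_\infty$ with $gKh \subseteq C$. This immediately implies non-Haar-nullness of $C$: any Borel probability measure $\mu$ on the Polish group $S_\infty$ is tight, so some compact $K$ has $\mu(K) > 0$; then $K \subseteq g^{-1} C h^{-1}$ yields $\mu(g^{-1} C h^{-1}) \geq \mu(K) > 0$, ruling out Haar nullity.

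The construction of $g, h$ exploits the classical structure of compact subsets of $S_\infty$: given compact $K$, one can choose, by induction on $n$, finite sets $F_0 \subseteq F_1 \subseteq \cdots$ with $\bigcup_n F_n = \mathbb{N}$ and $\pi(F_n) \cup \pi^{-1}(F_n) \subseteq F_{n+1}$ for every $\pi \in K$ and every $n$ (using that $\{\pi(x), \pi^{-1}(x) : \pi \in K\}$ is finite for each $x$). Writing $c_k$ for the number of prescribed $k$-cycles in $C$, the goal is to define $g, h \in S_\infty$ so that for every $\pi \in K$ the permutation $g \pi h$ has exactly $c_k$ cycles of length $k$ for each finite $k$ and infinitely many infinite cycles.

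I would carry this out by a back-and-forth construction interleaving two design objectives. First, a small portion of $\mathbb{N}$ is reserved and $g, h$ are arranged so that $g \pi h$ realizes on it the prescribed finite cycles for every $\pi \in K$; here compactness of $K$ crucially reduces the task to finitely many restriction-cases of $\pi$ on a finite domain. Second, on the remaining infinite portion, $g, h$ are designed so that $g \pi h$ shifts points across the blocks $F_{n+1} \setminus F_n$ in a ``forward'' direction, ensuring that every orbit escapes to infinity and is therefore infinite. The chief technical obstacle is synchronization: $g$ and $h$ must be global bijections of $\mathbb{N}$, and the action of $\pi \in K$ can freely mix the reserved finite region with the infinite region, so the back-and-forth must carefully schedule each commitment to $g(x)$ and $h(x)$ to avoid accidentally creating extra finite cycles or corrupting the prescribed ones. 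The finite branching of $K$ on each bounded domain is precisely what makes this scheduling tractable.
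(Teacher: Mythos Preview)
Your proposal contains a genuine error: the claim that \emph{every} such conjugacy class is a compact catcher is false. As the paper notes after Fact~\ref{f:biter}, the Dougherty--Mycielski argument shows that each of these classes is a compact \emph{biter}, while only the single class whose elements have \emph{no} finite orbits is a compact catcher. Your plan to realise the prescribed finite cycle structure uniformly over all $\pi\in K$ cannot succeed in general.

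Here is a concrete obstruction. Let $C_1$ be the class of permutations with exactly one fixed point, no other finite orbits, and infinitely many infinite orbits. Take the compact set
\[
K=\prod_{n\in\omega} S_{\{2n,2n+1\}}\subset S_\infty .
\]
For $g,h\in S_\infty$ and $\pi\in K$, a point $x$ is fixed by $g\pi h$ iff $h(x)$ and $g^{-1}(x)$ lie in the same pair $\{2n,2n+1\}$ and $\pi$ acts on that pair so as to send $h(x)$ to $g^{-1}(x)$. For each $n$, set $P_n=h^{-1}(\{2n,2n+1\})\cap g(\{2n,2n+1\})$; one checks that if $|P_n|=2$ then the two points of $P_n$ have the \emph{same} parity, so a suitable choice of $\pi|_{\{2n,2n+1\}}$ makes both of them fixed simultaneously. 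Running through the cases $|P_n|\in\{0,1,2\}$ one sees that for no choice of $g,h$ can the number of fixed points of $g\pi h$ equal $1$ for \emph{every} $\pi\in K$: either some $\pi$ produces at least two fixed points, or some $\pi$ produces none. Hence $C_1$ is not a compact catcher.

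The remedy is exactly the weakening the paper records: one passes to a relatively open piece $U\cap K$ of $K$ before translating. Concretely, one first fixes finitely many coordinates of $K$ (equivalently, restricts to $[p]\cap K$ for a suitable finite partial permutation $p$) so that on this neighbourhood the behaviour of every $\pi$ on a prescribed finite block is frozen; the required finite cycles of $g\pi h$ are then manufactured inside that block independently of $\pi$, and the back-and-forth you describe handles the remaining infinite part. Your outline of the construction via the exhausting sets $F_n$ and the forward-shifting mechanism is the right skeleton for this second half; the gap is only in the first step, where biting rather than catching is required whenever the target class carries prescribed finite orbits.
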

	
	Thus, the above theorems give a complete description of the non-Haar null 
	conjugacy classes and the (conjugacy invariant) properties of a random element. The aim of our paper is to initiate a systematic study of the size of the conjugacy classes of automorphism groups of countable structures. Our work is centered around questions of the following type:
	
	\begin{question}
		Let  $\mathcal{A}$ be a countable (first order) structure.
		\begin{enumerate}
			\item \label{q:darji} What properties of $\mathcal{A}$ ensure that (an appropriate) generalization of the theorem of Dougherty and Mycielski holds for $\aut(\mathcal{A})$?
			\item \label{q:general} Describe the (conjugacy invariant) properties of almost every element of $\aut(\mathcal{A})$: Which conjugacy classes of $\aut(\mathcal{A})$ are non-Haar null? How many non-Haar null conjugacy classes are there? Is almost every element of $\aut(\mathcal{A})$ contained in a non-Haar null class?
		\end{enumerate}
	\end{question}

	 In this paper we answer the first question, see Section \ref{s:results} and Theorem \ref{t:gen}.
	
	One can prove that in $S_\infty$ the collection of elements that have no infinite orbits is a co-meager set. This shows that the typical behavior in the sense of Baire category is quite different from the typical behavior in the measure theoretic sense. In particular, $S_\infty$ can be decomposed into the union of a Haar null and a meager set. It is well known that this is possible in every locally compact group, but the situation is not clear in the non-locally compact case. Thus, the below question of the first author arises naturally:
	
	\begin{question}
		Suppose that $G$ is an uncountable Polish group. Can it be written as the 
		union of a meager and a Haar null set?
	\end{question}
	
	We investigate this question for various automorphism groups, and solve it for a large class, see Corollary \ref{c:inffixed -> decomp}.

	The paper is organized as follows. First, in Section  \ref{s:prel} we summarize facts and notations used later, then in Section \ref{s:results} we give a detailed description of our results. For the sake of the transparency of the topic we also include in this section the results of two upcoming papers \cite{autqcikk} and \cite{autrcikk}. Section \ref{s:genres} contains our main theorem, while in Section \ref{s:appl} we apply the general result to prove a theorem about Haar null-meager decompositions. After this, in Section \ref{s:poss} we investigate the possible cardinality of non-Haar null conjugacy classes of (locally compact and non-locally compact) Polish groups. Finally, we conclude with listing a number of open questions in Section \ref{s:open}.

	\section{Preliminaries and notations}
	\label{s:prel}
	We will follow the notations of \cite{kechrisbook}. For a detailed introduction to the theory of Polish groups see \cite[Chapter 1]{becker1996descriptive}, while the model theoretic background can be found in \cite[Chapter 7]{hodges}. Nevertheless, we summarize the basic facts which we will use. 
	
	As usual, a countable structure $\mathcal{A}$ is a first order structure on a countable set with countably many constants, relations and functions. The underlying set will be denoted by $\dom(\mc{A})$. The automorphism group of the structure $\mathcal{A}$ is denoted by $\aut(\mathcal{A})$ which we consider as a topological (Polish) group with the topology of pointwise convergence. Isomorphisms between topological groups are considered to be group automorphisms that are also homeomorphisms. 
	The structure $\mathcal{A}$ is called \textit{ultrahomogeneous} if every isomorphism between its finitely generated substructures extends to an automorphism of $\mathcal{A}$. The \textit{age} of a structure $\mathcal{A}$ is the collection of the finitely generated substructures of $\mathcal{A}$. An injective homomorphism between structures will be called an \emph{embedding}. A structure is said to be \emph{locally finite} if every finite set of elements generates a finite substructure.  
	
	A countable set $\mc{K}$ of finitely generated structures  of the same language is called a \textit{Fra\"iss\'e class} if it satisfies the hereditary (HP), joint embedding (JEP) and amalgamation properties (AP) (see \cite[Chapter 7]{hodges}). We will need the notion of the strong amalgamation property: A Fra\"iss\'e class $\mc{K}$ satisfies the \emph{strong amalgamation property (SAP)} if  for every $\mc{B} \in \mc{K}$ and every pair of structures $\mc{C}, \mc{D} \in \mc{K}$ and embeddings $\psi: \mc{B} \to \mc{C}$ and $\chi:\mc{B} \to \mc{D}$ there exist $\mc{E} \in \mc{K}$ and embeddings $\psi': \mc{C} \to \mc{E}$ and $\chi':\mc{D} \to \mc{E}$ such that \[\psi' \circ \psi=\chi'\circ \chi \text{ and } \psi'(\mc{C})\cap \chi'(\mc{D})= (\psi' \circ \psi)(\mc{B})=(\chi'\circ \chi) (\mc{B}).\]
	
	For a Fra\"iss\'e class $\mc{K}$ the unique countable ultrahomogeneous structure $\mc{A}$ with $age(\mc{A})=\mc{K}$ is called the \emph{Fra\"iss\'e limit of} $\mc{K}$.
	If $G$ is the automorphism group of a structure $\mathcal{A}$, we call a bijection $p$ a \emph{partial automorphism} or a \emph{partial permutation} if it is an automorphism between two finitely generated substructures of $\mathcal{A}$ such that $p \subset g$ for some $g \in G$.
	
	As mentioned before, $S_\infty$ stands for the permutation group of the countably infinite set $\omega$. It is well known that $S_\infty$ is a Polish group with the pointwise convergence topology. This coincides with the topology generated by the sets of the form $[p]=\{f \in S_\infty: p \subset f\}$, where $p$ is a finite partial permutation.
	
	Let $\mathcal{A}$ be a countable structure. By the countability of $\mc{A}$, every automorphism $f \in \aut(\mc{A})$ can be regarded as an element of $S_\infty$, and it is not hard to see that in fact $\aut(\mc{A})$ will be a closed subgroup of $S_\infty$. Moreover, the converse is also true, namely every closed subgroup of $S_\infty$ is isomorphic to the automorphism group of a countable structure.

	Let $G$ be a closed subgroup of $S_\infty$. The \emph{orbit} of an element $x \in 
	\omega$ (under $G$) is the set $G(x) = \{y \in \omega : \exists g \in G \; (g(x) 
	= y)\}$. 
	For a set $S \subset \omega$ we denote the \emph{pointwise stabiliser} of $S$ 
	by $G_{(S)}$, that is, $G_{(S)} = \{g \in G : \forall s \in S\; (g(s) = s)\}$. In 
	case $S = \{x\}$, we write $G_{(x)}$ instead of $G_{(\{x\})}$. 
	
	As in the case of $S_\infty$, for a countable structure $\mathcal{A}$, an element $a \in \dom(\mc{A})$ and $f \in \aut(\mc{A})$ the set $\{f^{k}(a):k \in \mathbb{Z}\}$ is called the \textit{orbit of $a$} and denoted by $\mc{O}^f(a)$, while the cardinality of this set is called \textit{orbit length}. The \textit{collection of the orbits of $f$}, or \emph{the orbits of $f$} is the set $\{\mc{O}^f(a):a \in \dom(\mc{A})\}$. If $S \subset \dom(\mc{A})$ we will also use the notation $\mathcal{O}^f(S)$ for the set $\bigcup_{a \in S}\mc{O}^f(a)$.

	We will constantly use the following fact.
	
	\begin{fact}
		\label{f:compactchar} Let $\mathcal{A}$ be a countable structure. A closed subset $C$ of $\aut(\mathcal{A})$ is compact if and only if for every $a \in \dom(\mc{A})$ the set $\{f(a),f^{-1}(a):f \in C\}$ is finite.
	\end{fact}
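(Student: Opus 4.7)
For the $(\Rightarrow)$ direction, my plan is to use that for each $a \in \dom(\mc{A})$, the maps $f \mapsto f(a)$ and $f \mapsto f^{-1}(a)$ are continuous functions from $\aut(\mc{A})$ into the discrete space $\omega$ (the first because the topology is pointwise convergence, the second because inversion is continuous in the Polish group $\aut(\mc{A})$). The continuous image of the compact set $C$ is then compact, and compact subsets of the discrete $\omega$ are finite.

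For the $(\Leftarrow)$ direction I would embed $C$ into a compact product space. Writing $F_a = \{f(a), f^{-1}(a) : f \in C\}$ (finite by hypothesis), consider the map
\[ \Psi : C \to \Big(\prod_{a \in \dom(\mc{A})} F_a\Big) \times \Big(\prod_{a \in \dom(\mc{A})} F_a\Big), \qquad \Psi(f) = \bigl((f(a))_a,\, (f^{-1}(a))_a\bigr). \]
The codomain is compact by Tychonoff. The map $\Psi$ is a continuous injection, and is in fact a homeomorphism onto its image, because the topology on $C \subseteq \aut(\mc{A}) \subseteq S_\infty$ is precisely the initial topology generated by all coordinate evaluations of $f$ and $f^{-1}$.

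It then suffices to show that $\Psi(C)$ is closed in the compact codomain. Suppose $\Psi(f_n) \to (g, h)$, i.e.\ $f_n \to g$ and $f_n^{-1} \to h$ pointwise. A routine check, using that every fixed coordinate stabilizes, yields $g \circ h = h \circ g = \id$, so $g$ is a bijection with $g^{-1} = h$; hence $f_n \to g$ in the Polish group topology of $S_\infty$. Because $\aut(\mc{A})$ is closed in $S_\infty$ and $C$ is closed in $\aut(\mc{A})$, we conclude $g \in C$, and thus $(g,h) = \Psi(g) \in \Psi(C)$. Therefore $\Psi(C)$ is a closed subset of a compact space, hence compact, and via the homeomorphism $\Psi$ so is $C$.

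The only real subtlety, which I expect to be the main (minor) obstacle, is that one must track both $f$ and $f^{-1}$ in parallel: the pointwise limit of the $f_n$ alone need not be a bijection (indeed, $S_\infty$ is not closed in $\omega^\omega$ under plain pointwise convergence). Once the inverses are incorporated into the data, the compactness argument goes through cleanly.
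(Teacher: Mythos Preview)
The paper states this as a \textbf{Fact} without proof, so there is no argument to compare against. Your proof is correct and is exactly the standard way to establish this characterization.

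Your remark at the end about the ``only real subtlety'' is well taken and genuinely necessary: the one-sided condition (finiteness of $\{f(a):f\in C\}$ alone) does \emph{not} suffice. For instance, if $f_n\in S_\infty$ is the $n$-cycle $(0\,1\,\cdots\,n)$ (identity elsewhere), then $\{f_n(k):n\in\omega\}\subset\{0,k,k+1\}$ is finite for every $k$, yet $f_n\to g$ pointwise where $g(k)=k+1$ is the non-surjective shift; hence $\{f_n:n\in\omega\}$ is closed and discrete in $S_\infty$ but not compact, while $\{f_n^{-1}(0):n\}=\omega$ is infinite. So your decision to record both $f$ and $f^{-1}$ in $\Psi$ is not merely convenient but essential, and your verification that the pointwise limit $(g,h)$ satisfies $g\circ h=h\circ g=\id$ is the crux of the closedness argument.

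One small point worth making explicit for a reader: the claim that $\Psi$ is a homeomorphism onto its image cannot be deduced a posteriori from compactness of $\Psi(C)$ (the automatic direction goes the wrong way). You do justify it correctly via the initial-topology description, but it may be clearer to note directly that a subbasic open set $\{f\in C: f(a)=b\}$ maps to the relatively open set $\Psi(C)\cap\{(\phi,\psi):\phi(a)=b\}$, so $\Psi$ is open onto its image.
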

	
	We denote by $\mathcal{B}_\infty$ the countable atomless Boolean algebra, by $(\mathbb{Q},<)$ or $\mathbb{Q}$ the rational numbers as an ordered set. Let us use the notation $\mathcal{R}$ (or $(V,R)$) for the countably infinite random graph, that is, the unique countable graph with the following property: for every pair of finite disjoint sets $A,B \subset V$ there exists $v \in V$ such that $(\forall x \in A)(x R v)$ and $(\forall y \in B)(y \nr v)$.

	Let us consider the following notion of largeness:
	
	\begin{definition}
		\label{d:catcherbiter}
		Let $G$ be a Polish topological group. A set $A \subset G$ is called \textit{compact catcher} if for every compact $K \subset G$ there exist $g,h \in G$ so that $gKh \subset A$.  $A$ is \textit{compact biter} if for every compact $K \subset G$ there exist an open set $U$ and $g,h \in G$ so that $U \cap K \not = \emptyset$, and $g(U \cap K)h \subset A$. 
	\end{definition}
	The following easy observation is one of the most useful tools to prove that a certain set is not Haar null.
	\begin{fact}
		\label{f:biter}
		If $A$ is compact biter then it is not Haar null.
	\end{fact}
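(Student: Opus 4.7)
The plan is to argue by contradiction. Suppose $A$ is Haar null; then there is a Borel set $B \supset A$ and a Borel probability measure $\mu$ on $G$ with $\mu(gBh) = 0$ for every $g,h \in G$. My first step would be to reduce to the case where the witnessing measure has compact support. By inner regularity of Borel probability measures on Polish spaces, pick a compact set $K_0 \subset G$ with $\mu(K_0) > 0$ and replace $\mu$ by the normalized restriction $\nu(\cdot) = \mu(\cdot \cap K_0)/\mu(K_0)$. Then $\nu$ is a Borel probability measure with $\nu(gBh) \leq \mu(gBh)/\mu(K_0) = 0$ for all $g,h \in G$, and $\supp(\nu) \subset K_0$ is compact.

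Let $K = \supp(\nu)$. Apply the compact biter hypothesis to $K$ to obtain an open set $U \subset G$ and group elements $g,h \in G$ such that $U \cap K \neq \emptyset$ and $g(U \cap K)h \subset A \subset B$. I would then compute $\nu(U \cap K)$ in two ways. On the one hand, $K$ is the topological support of $\nu$ and $U$ meets $K$, so by the definition of support $\nu(U) > 0$; since $\nu$ is concentrated on $K$ we get $\nu(U \cap K) = \nu(U) > 0$. On the other hand, $U \cap K \subset g^{-1} B h^{-1}$, so $\nu(U \cap K) \leq \nu(g^{-1} B h^{-1}) = 0$. This is the desired contradiction.

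There is no serious obstacle here beyond the small but essential observation that one should bite the support of the (compactly supported) witness measure rather than an arbitrary compact set of positive $\nu$-measure. Choosing $K = \supp(\nu)$ is precisely what guarantees that the relatively open piece $U \cap K$ produced by the biter condition carries positive $\nu$-measure, which is exactly what is needed to clash with the Haar null property.
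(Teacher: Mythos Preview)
Your proof is correct and follows essentially the same approach as the paper: both take a compact set of positive witness-measure, pass to a compact subset on which every nonempty relatively open set has positive measure, and then derive a contradiction from the biter condition. The paper phrases the key reduction as ``subtracting the relatively open $\mu$-measure zero subsets of $K$,'' whereas you pass to the topological support of the restricted measure; these are the same operation.
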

	\begin{proof}
		Suppose that this is not the case and let $B \supset A$ be a Borel Haar null set and $\mu$ be a witness measure for $B$. Then, by the regularity of $\mu$, there exists a compact set $K \subset G$ such that $\mu(K)>0$. Subtracting the relatively open $\mu$ measure zero subsets of $K$ we can suppose that for every open set $U$ if $U \cap K \not =\emptyset$ then $\mu(U \cap K)>0$. But, as $A$ is compact biter, so is $B$, thus for some open set $U$ with $\mu(U \cap K)>0$ there exist $g,h \in G$ so that $g(U\cap K)h \subset B$. This shows that $\mu$ cannot witness that $B$ is Haar null, a contradiction.
	\end{proof}
	
	Note that the proof of Theorem \ref{t:DMold} by Dougherty and Mycielski actually shows that every non-Haar null conjugacy class is compact biter and the unique non-Haar null conjugacy class which contains elements without finite orbits is compact catcher.
	
	It is sometimes useful to consider right and left Haar null sets: a Borel set $B$ is \emph{right (resp. left) Haar null} if there exists a Borel probability measure $\mu$ on $G$ such that for every $g \in G$
	we have $\mu(Bg)=0$ (resp. $\mu(gB)=0$). An arbitrary set $S$ is called \emph{right (resp. left) Haar null} if $S \subset B$ for some Borel right (resp. left) Haar null set $B$. The following observation will be used several times.

	\begin{lemma}
		\label{l:conjugacyinvariant}
		Suppose that $B$ is a Borel set that is invariant under conjugacy. Then $B$ is left Haar null iff it is right Haar null iff it is Haar null.
	\end{lemma}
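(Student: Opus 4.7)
The plan is to deduce the three equivalences from one clean algebraic observation: if $B$ is invariant under conjugacy, then for every $g \in G$ we have $gBg^{-1}=B$, which rearranges to $gB=Bg$. Consequently, every two-sided translate collapses to a one-sided translate: for all $g,h \in G$,
\[
gBh = g(hB) = (gh)B = B(gh).
\]
Once this is in hand, a single witness measure for one-sided smallness will automatically witness the apparently stronger two-sided condition.

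First, I would dispatch the trivial directions: if $\mu$ witnesses that $B$ is Haar null, then setting $h=e$ (respectively $g=e$) shows that $\mu$ also witnesses left (respectively right) Haar nullness.

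For the nontrivial direction, assume $B$ is Borel and conjugacy invariant, and that $B$ is left Haar null. By definition, pick a Borel set $B' \supseteq B$ and a Borel probability measure $\mu$ on $G$ with $\mu(gB')=0$ for every $g \in G$; then $\mu(gB) \le \mu(gB')=0$ for every $g$ as well. Using the identity $gBh=(gh)B$ derived above, we obtain $\mu(gBh)=\mu((gh)B)=0$ for all $g,h \in G$, so the very same $\mu$ shows that $B$ is Haar null. The argument from right Haar null to Haar null is symmetric, applying $gBh=B(gh)$.

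There is no real obstacle here; the only point requiring mild care is to keep straight the two layers in the definition (an arbitrary set is Haar null if it is contained in a Borel Haar null set), so that the witness measure for a Borel superset is immediately available to handle $B$ itself. Everything else is a one-line manipulation powered by $gB=Bg$.
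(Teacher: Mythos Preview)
Your proof is correct and follows essentially the same approach as the paper: both use conjugacy invariance to rewrite the two-sided translate $gBh$ as the one-sided translate $(gh)B$, so that a witness for left Haar nullness automatically witnesses Haar nullness. The paper writes this as $gBh = gh(h^{-1}Bh) = ghB$, while you phrase it via $Bh = hB$; these are the same manipulation, and your extra care about the Borel superset $B'$ is unnecessary here since $B$ is assumed Borel.
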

	\begin{proof}
		Let $\mu$ be a measure witnessing that $B$ is left Haar null. We check that it also witnesses the Haar nullness of $B$. Indeed, let $g,h \in G$ arbitrary, $\mu(gBh)=\mu(ghh^{-1}Bh)=\mu(ghB)=0$. The proof is analogous when $B$ is right Haar null.
		
	\end{proof}

	\section{Description of the results}
	\label{s:results}

	We start with defining the crucial notion for the description of the orbits of a random element of an automorphism group. Informally, the following definition says that our structure is free enough: if we want to extend a partial automorphism defined on a finite set, there are only finitely many points for which we have only finitely many options.

	\begin{definition} \label{NACdef} Let $G$ be a closed subgroup of $S_\infty$. We say that \textit{$G$ has the finite algebraic closure property ($FACP$)} if for every finite $S \subset \omega$ the set $\{b:|G_{(S)}(b)|<\infty\}$ is finite.
	\end{definition}
	
	The following model theoretic property of Fra\"iss\'e classes turns out to be essentially a reformulation of the $FACP$ for the automorphism groups of the limits.

	\begin{definition} \label{CSAPdef} Let $\mc{K}$ be a Fra\"iss\'e class. We say that \textit{$\mc{K}$ has the cofinal strong amalgamation property (CSAP)} if there exists a subclass of $\mc{K}$ cofinal under embeddability, which satisfies the strong amalgamation property, or more formally: for every $\mc{B}_0 \in \mc{K}$ there exists a $\mc{B} \in \mc{K}$ and an embedding $\phi_0:\mc{B}_0 \to \mc{B}$ so that the \emph{strong amalgamation property holds over $\mc{B}$}, that is, for every pair of structures $\mc{C}, \mc{D} \in \mc{K}$ and embeddings $\psi: \mc{B} \to \mc{C}$ and $\chi:\mc{B} \to \mc{D}$ there exist $\mc{E} \in \mc{K}$ and embeddings $\psi': \mc{C} \to \mc{E}$ and $\chi':\mc{D} \to \mc{E}$ such that \[\psi' \circ \psi=\chi'\circ \chi \text{ and } \psi'(\mc{C})\cap \chi'(\mc{D})= (\psi' \circ \psi)(\mc{B})=(\chi'\circ \chi) (\mc{B}).\]

		A Fra\"iss\'e limit $\mc{A}$ is said to \textit{have the cofinal strong amalgamation property} if $age(\mc{A})$ has the CSAP.	
	\end{definition}

	Generalizing the results of Dougherty and Mycielski we show that the $FACP$ is equivalent to some properties of the orbit structure of a random element of the group.

	\begin{namedthm*}{Theorem \ref{t:gen}}
		Let $\mathcal{A}$ be a locally finite Fra\"iss\'e limit. Then the following are equivalent:
		\begin{enumerate}
			\item \label{tc:finfin} almost every element of $\aut(\mc{A})$ has finitely many finite orbits,
			\item \label{tc:FACP} $\aut(\mc{A})$ has the $FACP$,
			\item \label{tc:CSAP} $\mc{A}$ has the CSAP.
		\end{enumerate}
		Moreover, any of the above conditions implies that almost every element of $\mc{A}$ has infinitely many infinite orbits.
	\end{namedthm*}
	Note that every relational structure and also $\mc{B}_\infty$ is locally finite, moreover, it is well known that the ages of the structures $\mathcal{R}, (\mathbb{Q},<)$ and $\mc{B}_\infty$ have the strong amalgamation property which clearly implies the CSAP (it is also easy to directly check the $FACP$ for these groups). Hence we obtain the following corollary.
	\begin{corollary}
		\label{c:fininf}
		In $\aut(\mathcal{R}), \aut(\mathbb{Q}, <)$ and $\aut(\mathcal{B}_\infty)$ almost every element has finitely many finite and infinitely many infinite orbits.
	\end{corollary}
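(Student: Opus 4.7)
The plan is to invoke Theorem \ref{t:gen} directly for each of the three structures. For this, I need to verify two hypotheses: that each structure is a locally finite Fra\"iss\'e limit, and that one of the three equivalent conditions holds. I would aim for condition (\ref{tc:CSAP}), the CSAP, since the paper already observes that the ages of all three structures satisfy the stronger strong amalgamation property, and SAP trivially implies CSAP (the whole class serves as its own cofinal subclass, with $\phi_0=\id$).

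For local finiteness, $\mathcal{R}$ and $(\mathbb{Q},<)$ are purely relational, so every finitely generated substructure is just its finite generating set; and for $\mc{B}_\infty$, the subalgebra generated by a finite set of size $n$ is a finite Boolean algebra (of size at most $2^{2^n}$), giving local finiteness. Each of the three structures is the Fra\"iss\'e limit of its age by classical results: $\mathcal{R}$ of the finite graphs, $(\mathbb{Q},<)$ of the finite linear orders, and $\mc{B}_\infty$ of the finite Boolean algebras. For the SAP itself, in each case one takes a ``free'' amalgam over $\mc{B}$: for graphs one forms the disjoint union of $\mc{C}$ and $\mc{D}$ glued along $\mc{B}$ with no new edges added; for linear orders one interleaves the new points of $\mc{C}$ and $\mc{D}$ in any order consistent with $\mc{B}$; and for Boolean algebras one takes the coproduct over the common subalgebra. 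In all three cases the intersection of the images of $\mc{C}$ and $\mc{D}$ in the amalgam is exactly the image of $\mc{B}$, so SAP (hence CSAP) holds.

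Once these hypotheses are checked, Theorem \ref{t:gen} gives condition (\ref{tc:finfin}), that almost every element has finitely many finite orbits, and the ``moreover'' clause gives that almost every element has infinitely many infinite orbits; combining these yields the corollary for all three groups simultaneously. There is essentially no obstacle: the entire content of the corollary is a checklist that the classical examples $\mathcal{R}$, $(\mathbb{Q},<)$, $\mc{B}_\infty$ fall under the hypotheses of the main theorem, and the SAP verifications above are standard Fra\"iss\'e-theoretic folklore. The only mildly non-trivial ingredient is the local finiteness of $\mc{B}_\infty$, which reduces to the finiteness of the free Boolean algebra on finitely many generators.
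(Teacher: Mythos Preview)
Your proposal is correct and follows essentially the same route as the paper: verify that each of the three structures is a locally finite Fra\"iss\'e limit whose age has the SAP (hence the CSAP), and then invoke Theorem~\ref{t:gen}. The paper states these verifications in a single sentence before the corollary, while you spell out the standard constructions of the strong amalgams; the only point worth a remark is that your coproduct argument for $\mc{B}_\infty$ does indeed yield SAP (via Stone duality the amalgam is the fiber product of atom sets, and one checks directly that the images of $\mc{C}$ and $\mc{D}$ intersect exactly in the image of $\mc{B}$), even though the paper's later Remark asserts the contrary for $age(\mc{B}_\infty)$---that Remark misapplies Schmerl's criterion, which for functional languages should read $\ACL(B)=B$ for finite \emph{substructures} $B$, a condition $\mc{B}_\infty$ does satisfy.
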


	As a corollary of our results, in Section \ref{s:appl} we show that a large number of groups can be partitioned in a Haar null and a meager set. 

	\begin{namedthm*}{Corollary \ref{c:inffixed -> decomp}}
		Let $G$ be a closed subgroup of $S_\infty$ satisfying the $FACP$ and suppose that the set $F = \{g \in G : \text{$\fix(g)$ is infinite}\}$ is dense in $G$. Then $G$ can be decomposed into the union of an (even conjugacy invariant) Haar null and a meager set. 
	\end{namedthm*}
	
	\begin{namedthm*}{Corollary \ref{c:Aut(R), Aut(Q), Aut(B_inf) decomposes}}
		$\aut(\mathcal{R})$, $\aut(\mathbb{Q}, <)$ and $\aut(\mc{B}_\infty)$ (and hence $\homeo(2^\mathbb{N})$) can be decomposed into the union of an (even conjugacy invariant) Haar null and a meager set.
	\end{namedthm*}

	However, these results are typically far from the full description of the behavior of the random elements. We continue with summarizing our results from \cite{autrcikk} and \cite{autqcikk} about two special cases, $\aut(\mathbb{Q}, <)$ and $\aut(\mathcal{R})$, where we gave a complete description of the Haar positive conjugacy classes. 
	
	\subsection{Summary of the random behavior in \texorpdfstring{$\aut(\mathbb{Q}, <)$}{Aut(Q, <)} and \texorpdfstring{$\aut(\mc{R})$}{Aut(R)}}

	In order to describe our results about $\aut(\Q, <)$ we need the concept of orbitals (defined below, for more details on this topic see \cite{Glass}). Let $p, q \in \Q$. The interval $(p, q)$ will denote the set $\{r \in \Q : p < r < q\}$. For an automorphism $f \in \aut(\Q, <)$, we denote the set of fixed points of $f$ 
	by $\fix(f)$. 
	\begin{definition}
		\label{d:orbital}
		The set of \emph{orbitals} of an automorphism $f \in \aut(\Q, <)$, $\mathcal{O}^*_f$, consists of 
		the convex hulls (relative to $\Q$) of the orbits of the rational numbers, that 
		is
		$$
		\mathcal{O}^*_f = \{ \conv(\{f^n(r) : n \in \Z\}) : r \in \Q\}.
		$$
	\end{definition}
	It is easy to see that the orbitals of $f$ form a partition of $\Q$, with the 
	fixed points determining one element orbitals, hence ``being in the same 
	orbital'' is an equivalence relation. Using this fact, we define the relation 
	$<$ on the set of orbitals by letting $O_1 < O_2$ for distinct $O_1, O_2 \in 
	\mathcal{O}^*_f$ if $p_1 < p_2$ for some (and hence for all) $p_1 \in O_1$ and 
	$p_2 \in O_2$. Note 
	that $<$ is a linear order on the set of orbitals. 
	
	It is also easy to see that if $p, q \in \Q$ are in the same orbital of $f$ 
	then $f(p) > p \Leftrightarrow f(q) > q$, $f(p) < p \Leftrightarrow f(q) < q$ 
	and $f(p) = p \Leftrightarrow f(q) = q \Rightarrow p = q$. This observation 
	makes it possible to define the \emph{parity function}, $s_f : \mathcal{O}^*_f 
	\to \{-1, 0, 1\}$. Let $s_f(O) = 0$ if $O$ consists of a fixed point of $f$, 
	$s_f(O) = 1$ if $f(p) > p$ for some (and hence, for all) $p \in O$ and $s_f(O) 
	= -1$ if $f(p) < p$ for some (and hence, for all) $p \in O$.  
	
	\begin{theorem} \label{t:qintro} (see \cite{autqcikk})

		For almost every element $f$ of $\aut(\mathbb{Q}, <)$ 
		\begin{enumerate}
			\item for distinct orbitals $O_1, O_2 \in \mathcal{O}^*_f$ (see Definition \ref{d:orbital})
			with $O_1 < O_2$ such that $s_f(O_1) = s_f(O_2) = 1$ or $s_f(O_1) = s_f(O_2) 
			= -1$, there exists an orbital $O_3 \in \mathcal{O}^*_f$ with $O_1 < O_3 < O_2$ 
			and $s_f(O_3) \neq s_f(O_1)$,
			\item (follows from Theorem \ref{t:gen}) $f$ has only finitely many fixed points. 
		\end{enumerate}
		These properties characterize the non-Haar null conjugacy classes, i. e., a conjugacy class is non-Haar null if and only if one (or equivalently each) of its elements has these properties.
		
		Moreover, every non-Haar null conjugacy class is compact biter and those non-Haar null classes in which the elements have no rational fixed points are compact catchers. 
	\end{theorem}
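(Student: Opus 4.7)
The plan is to handle the three assertions in turn: almost surely (2), almost surely (1), and then the conjugacy-class characterization together with the compact biter/catcher statement. Assertion (2) is immediate from Theorem~\ref{t:gen}, since the age of $(\Q,<)$ is the class of finite linear orders, which trivially has the strong amalgamation property and hence the CSAP; thus almost every $f$ has only finitely many finite orbits, and in particular only finitely many fixed points. The ``only if'' direction of the characterization then follows at once: both (1) and (2) are conjugacy-invariant, so once the set of $f$ failing either condition is known to be Haar null, every conjugacy class containing such an $f$ is contained in a Haar null set.

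For the almost-sure validity of (1), write $B$ for the set of $f$ failing (1). Since $B$ is conjugacy-invariant, by Lemma~\ref{l:conjugacyinvariant} it suffices to witness that $B$ is right (equivalently, left) Haar null. Decompose $B=\bigcup_{p<q\in\Q}(B^+_{p,q}\cup B^-_{p,q})$, where $B^+_{p,q}$ consists of those $f$ with $p,q$ in distinct orbitals of parity $+1$ and no parity $-1$ orbital strictly between, and $B^-_{p,q}$ is defined symmetrically; since the Haar null sets form a $\sigma$-ideal it is enough to show each $B^\pm_{p,q}$ is Haar null. Fix $p<q$ and choose disjoint closed intervals $J_1,J_2,\ldots\subset(p,q)$ with rational endpoints whose union is dense in $(p,q)$; for each $n$ pick two fixed-point-free order-automorphisms $\sigma_n^+,\sigma_n^-$ of $J_n\cap\Q$, one shifting right and one shifting left. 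Let $K$ be the compact set of automorphisms that agree with the identity off $\bigcup_n J_n$ and with some $\sigma_n^{\varepsilon_n}$ on each $J_n$, parametrized by $\{+,-\}^\omega$, and let $\mu$ be the pushforward of symmetric Bernoulli measure on $K$. The key claim is that $\mu(gB^+_{p,q}h)=\Pr[g^{-1}\sigma h^{-1}\in B^+_{p,q}]=0$ for all $g,h\in\aut(\Q,<)$: because the independent coin-flips occur on a dense family of intervals and produce oscillations at arbitrarily fine scales, almost surely infinitely many of the resulting $-1$-orbitals survive post-composition with $g$ and $h$ and land strictly between any prescribed pair of parity $+1$ orbitals of $g^{-1}\sigma h^{-1}$. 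Formalizing this uniform-in-$(g,h)$ density step is the main technical obstacle.

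For the ``if'' direction, assume $f_0$ satisfies (1) and (2); I aim to show $[f_0]$ is compact biter, and compact catcher when $f_0$ has no rational fixed points. Given a compact $K\subset\aut(\Q,<)$, Fact~\ref{f:compactchar} implies that each rational has only finitely many $K$-images, so $K$ is encoded by a finitely-branching tree of finite partial order-automorphisms. The plan is a back-and-forth construction of $g,h\in\aut(\Q,<)$, together with (in the general case) a non-empty relatively open $U\subset K$, such that $g(U\cap K)h\subset[f_0]$. The alternation of $+1$ and $-1$ orbitals granted by (1) provides exactly the combinatorial flexibility needed to realize, inside $f_0$, any finite orbital pattern appearing in elements of $K$; condition (2) allows the finitely many fixed points of members of $K$ to be absorbed into the finite set $\fix(f_0)$. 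The restriction to an open subset of $K$ is needed solely to discard those elements of $K$ whose fixed-point pattern is incompatible with $\fix(f_0)$; when $f_0$ has no rational fixed points this obstruction disappears and one obtains full compact catcher.
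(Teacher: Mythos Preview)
The paper does not actually prove Theorem~\ref{t:qintro}: apart from noting that part~(2) follows from Theorem~\ref{t:gen}, it defers entirely to the companion paper \cite{autqcikk}. So there is no in-paper argument to compare against beyond that single reduction. Your derivation of~(2) from Theorem~\ref{t:gen} is correct and is exactly what the paper does.

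Your argument for~(1), however, is not a proof but a proof sketch with a self-identified hole, and the hole is genuine. Your witness measure $\mu$ is supported on automorphisms $\sigma$ that are the identity off $\bigcup_n J_n\subset(p,q)$. For a given pair $g,h$, the composite $g^{-1}\sigma h^{-1}$ is therefore \emph{deterministic} outside the set $h\bigl(\bigcup_n J_n\bigr)$, and your randomness influences it only on that set. There is no reason this set should meet the region between the two relevant orbitals of $g^{-1}\sigma h^{-1}$; indeed $h((p,q))$ can be disjoint from $(p,q)$. More to the point, even when $h(J_n)$ lands in the right place, the sign of the orbital of $g^{-1}\sigma h^{-1}$ on $h(J_n)$ is not simply the coin-flip $\varepsilon_n$: it depends on the interaction of $\sigma_n^{\varepsilon_n}$ with $g^{-1}$ and $h^{-1}$, which can reverse signs or merge the interval into a larger orbital. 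The phrase ``oscillations at arbitrarily fine scales survive post-composition'' is exactly the statement that needs to be proved, not a mechanism for proving it. Note also a logical slip: you invoke Lemma~\ref{l:conjugacyinvariant} to reduce to one-sided Haar null for $B$, but then decompose into pieces $B^{\pm}_{p,q}$ which are \emph{not} conjugacy-invariant, so for those pieces you must establish the full two-sided condition (as you in fact attempt).

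The compact biter/catcher argument is likewise only an outline. Saying that the alternation of parities ``provides exactly the combinatorial flexibility needed'' and that fixed points ``can be absorbed'' names the phenomena one hopes to exploit but does not carry out the back-and-forth; in particular you do not explain how to choose the open set $U$, nor how the finitely-branching tree structure of $K$ interacts with the recursive construction of $g$ and $h$. These are substantial arguments in \cite{autqcikk}, not routine verifications.
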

	
	This yields the following surprising corollary (for the details see \cite{autqcikk}):
	
	\begin{corollary}
		\label{c:autqcont} 
		There are continuum many non-Haar null conjugacy classes in $\aut(\mathbb{Q},<)$, and their union is co-Haar null.
	\end{corollary}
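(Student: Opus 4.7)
The co-Haar null conclusion is immediate from Theorem \ref{t:qintro}: almost every $f \in \aut(\mathbb{Q}, <)$ satisfies properties (1) and (2), so the union of non-Haar null conjugacy classes---which contains every such $f$---is co-Haar null.

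For the continuum many assertion, the plan is to construct a family $\{f_\alpha\}_{\alpha \in 2^{\mathbb{N}}}$ of pairwise non-conjugate elements of $\aut(\mathbb{Q}, <)$, each satisfying (1) and (2). First I would invoke the classical fact (proved by a back-and-forth argument within each pair of matched orbitals; cf.~\cite{Glass}) that $f, g \in \aut(\mathbb{Q}, <)$ are conjugate if and only if their labeled orbital structures $(\mathcal{O}^*_f, <, s_f)$ and $(\mathcal{O}^*_g, <, s_g)$ are isomorphic as labeled linear orders. It then suffices to exhibit $2^{\aleph_0}$ pairwise non-isomorphic such labeled orders, each realizable as an orbital structure satisfying (1) and (2).

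Next I would build, for each $\alpha \in 2^{\mathbb{N}}$, a labeled order $(L_\alpha, s_\alpha)$ as an $\omega$-concatenation of blocks, where the $n$-th block consists of a finite chain of order-adjacent orbitals of length $\alpha(n) + 2$ (with alternating $\pm 1$ labels so that (1) holds inside the chain) followed by a dense $\eta$-type piece in which both $+1$ and $-1$ parities are dense (which lets (1) hold within and between blocks). Then I would realize each $(L_\alpha, s_\alpha)$ as the orbital structure of some $f_\alpha \in \aut(\mathbb{Q}, <)$ by partitioning $\mathbb{Q}$ into convex pieces indexed by $L_\alpha$ and installing a translation-like action of the prescribed parity on each piece; condition (2) is automatic since no fixed points are introduced.

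The main obstacle is to verify that $\alpha$ is recoverable from the isomorphism type of $(L_\alpha, s_\alpha)$. For this I would show that the maximal finite chains of consecutive order-adjacent elements (each having an immediate predecessor or successor within such a chain) are an intrinsically defined feature of the abstract labeled order, and that the dense $\eta$-pieces contain no such adjacencies; hence any labeled-order isomorphism $(L_\alpha, s_\alpha) \to (L_\beta, s_\beta)$ pairs up these chains in order, so that the common sequence of chain lengths forces $\alpha(n) + 2 = \beta(n) + 2$ for every $n$. Distinct $\alpha$ thus give non-conjugate $f_\alpha$, producing $2^{\aleph_0}$ non-Haar null conjugacy classes.
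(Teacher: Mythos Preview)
Your argument is sound. The co-Haar null assertion is indeed immediate from Theorem~\ref{t:qintro}, and your construction for the continuum-many part works: the realization of an arbitrary countable labeled order $(L,s)$ with $s\colon L\to\{-1,+1\}$ as an orbital structure follows from the observation that $L\times\mathbb{Q}$ with the lexicographic order is itself a countable dense linear order without endpoints, hence isomorphic to $\mathbb{Q}$, and on each fibre $\{\ell\}\times\mathbb{Q}$ one can install a translation of the prescribed sign. The recovery of $\alpha$ from the isomorphism type of $(L_\alpha,s_\alpha)$ via the sequence of maximal finite adjacency-chains is a clean order-theoretic invariant, and the alternation of labels together with the density of both parities in the $\eta$-pieces does guarantee condition~(1) globally, including across block boundaries.

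As for comparison with the paper: note that the present paper does not actually prove this corollary---it only states it and refers to \cite{autqcikk} for the details. So there is no in-text proof to compare against. Your approach is a natural direct verification from the characterization in Theorem~\ref{t:qintro}: once one knows that the non-Haar null classes are exactly those whose orbital pattern satisfies (1) and (2), exhibiting continuum many non-isomorphic such patterns is a purely combinatorial exercise, and your encoding by chain-lengths is perhaps the most transparent way to do it. One small point worth making explicit in a written version is the realizability step (i.e., that every countable labeled order with values in $\{-1,+1\}$ arises from some $f\in\aut(\mathbb{Q},<)$); you gesture at ``partitioning $\mathbb{Q}$ into convex pieces indexed by $L_\alpha$'', and the $L\times\mathbb{Q}$ trick makes this precise in one line.
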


	Note that it was proved by Solecki \cite{openlyhaarnull} that in every non-locally compact Polish group that admits a two-sided invariant metric there are continuum many pairwise disjoint non-Haar null Borel sets, thus the above corollary is an extension of his results for $\aut(\mathbb{Q},<)$ (see also the case of $\aut(\mc{R})$ below). We would like to point out that in a sharp contrast to this result, in $\hp([0,1])$ (that is, in the group of order preserving homeomorphisms of the interval) the random behavior is quite different (see \cite{homeo}), more similar to the case of $S_\infty$: there are only countably many non-Haar null conjugacy classes and their union is co-Haar null.

	The characterization of non-Haar null conjugacy classes of the automorphism group of the random graph appears to be similar to the characterization of the non-Haar null classes of $\aut(\Q, <)$, however their proofs are completely different. 
	
	\begin{theorem} \label{t:randomintro} (see \cite{autrcikk}) For almost every element $f$ of $\aut(\mc{R})$
		\begin{enumerate}
			\item for every pair of finite disjoint sets, $A,B \subset V$ there exists $v \in V$ such that $(\forall x \in A)( x R v)$ and $(\forall y \in B)(y \nr v)$  \textit{ and $v \not \in \mathcal{O}^f(A \cup B)$, i. e., the union of orbits of the elements of $A \cup B$},
			\item (from Theorem \ref{t:gen}) $f$ has only finitely many finite orbits. 
		\end{enumerate}
		These properties characterize the non-Haar null conjugacy classes, i. e., a conjugacy class is non-Haar null if and only if one (or equivalently each) of its elements has these properties.
		
		Moreover, every non-Haar null conjugacy class is compact biter and those non-Haar null classes in which the elements have no finite orbits are compact catchers.  
	\end{theorem}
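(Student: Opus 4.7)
My plan for property (2) is a direct appeal to Theorem \ref{t:gen}: the age of $\mc{R}$ consists of finite graphs and obviously has the strong amalgamation property (amalgamate $\mc{C}$ and $\mc{D}$ over $\mc{B}$ as their disjoint union with no extra edges), hence has CSAP, and $\mc{R}$ is a locally finite Fra\"iss\'e limit, so Theorem \ref{t:gen} immediately gives (2) for almost every $f$ (and also that almost every $f$ has infinitely many infinite orbits, which will be used below).

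For property (1), I would fix finite disjoint $A, B \subset V$, set
\[
C_{A,B} := \{v \in V : x R v \text{ for all } x \in A \text{ and } y \nr v \text{ for all } y \in B\}
\]
(which is infinite by the defining extension property of $\mc{R}$), and define the ``bad set''
\[
\text{Bad}_{A,B} := \{f \in \aut(\mc{R}) : C_{A,B} \subseteq \mc{O}^f(A \cup B)\},
\]
which is Borel and conjugacy-invariant. Since there are only countably many pairs $(A,B)$, it suffices to show each $\text{Bad}_{A,B}$ is Haar null, and by Lemma \ref{l:conjugacyinvariant} it is enough to verify left Haar nullness. The plan is to inductively choose a sequence $(c_i)_{i \in \omega} \subset C_{A,B}$ with each $c_i$ realizing a fully generic extension type over $A \cup B \cup \{c_0, \dots, c_{i-1}\}$, build a compact set $K \subset \aut(\mc{R})$ of automorphisms that fix $A \cup B$ pointwise and act on $\{c_i\}$ by a compact family of prescribed permutations (compactness follows from Fact \ref{f:compactchar}), and take $\mu$ to be the natural product measure on $K$. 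The main obstacle will be verifying that $\mu(g \cdot \text{Bad}_{A,B}) = 0$ for every $g$: the genericity of the $c_i$ should drive a Borel--Cantelli-style argument showing that for $\mu$-almost every $f \in K$, the composition $gf$ leaves infinitely many of the $g(c_i)$ (or appropriate analogues) outside $\mc{O}^{gf}(A \cup B)$.

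For the characterization of non-Haar null conjugacy classes, the plan is a standard Fra\"iss\'e back-and-forth: given $f$ and $f'$ both satisfying (1), (2) and sharing the same orbit-type invariant (multiset of finite orbit lengths, cardinality of the set of infinite orbits, and isomorphism type of the induced subgraph on the union of the finite orbits), I would extend partial conjugacies one vertex at a time, using property (1) at each step to place the $f'$-image of a new vertex \emph{outside} all currently committed orbits with any prescribed connection pattern. For the compact biter/catcher conclusion, given a compact $K \subset \aut(\mc{R})$ and a target non-Haar null class $\mathcal{C}$, Fact \ref{f:compactchar} bounds the $K$-range on each vertex, and I would run the same back-and-forth simultaneously across all $k \in K$, again using property (1) to place fresh vertices outside every $k$-image orbit. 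The construction will hit an obstruction exactly at finite orbits; restricting to a suitable open neighborhood $U$ of $K$ circumvents this and yields the compact biter property, while in the absence of finite orbits the construction proceeds unobstructed to give compact catcher.
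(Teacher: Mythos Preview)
The present paper does not prove this theorem: it is stated as a summary of results from the companion paper \cite{autrcikk}, and the characterization of non-Haar null classes together with the compact biter/catcher assertions are deferred there entirely. What the present paper \emph{does} supply are the ingredients for the ``almost every'' direction, and on that your plan partly matches and partly diverges.

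For property (2) your appeal to Theorem \ref{t:gen} is exactly what the paper intends (the statement itself flags ``from Theorem \ref{t:gen}'').

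For property (1) the routes differ. In this paper, (1) falls out of Proposition \ref{p:F and C co-Haar null}: for $\aut(\mc{R})$ the set $C_{A,B}$ is precisely an infinite orbit of the pointwise stabiliser $G_{(A\cup B)}$, so membership in the co-Haar null set $\mathcal{C}$ forces $C_{A,B}$ not to be covered by the finitely many orbits $\mathcal{O}^f(A\cup B)$, yielding the required $v$. The witness measure is the single random-permutation construction of Section \ref{s:genres}, built once for an arbitrary closed $FACP$ subgroup of $S_\infty$, with the Borel--Cantelli estimate carried out in Lemma \ref{l:badelements} and Claim \ref{c:dm:1}. Your proposal instead fixes $(A,B)$ and tries to manufacture a bespoke compact $K$ and product measure from a sequence of ``generic'' vertices $c_i$. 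This is a plausible alternative strategy, but as written it has a real gap: you do not explain how a family of permutations of $A\cup B\cup\{c_i\}$ extends to a \emph{compact} set of full automorphisms of $\mc{R}$ (Fact \ref{f:compactchar} demands control at \emph{every} vertex, not just the chosen ones), nor why for an arbitrary translate $gf$ the orbits $\mathcal{O}^{gf}(A\cup B)$ miss $C_{A,B}$ with probability one. The phrase ``genericity should drive a Borel--Cantelli-style argument'' is precisely the hard part, and is what the paper's general construction is engineered to handle.

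For the converse direction (that classes satisfying (1) and (2) are non-Haar null) and the compact biter/catcher claims, there is nothing in this paper to compare against; your back-and-forth sketch is in the expected spirit, but its correctness would have to be checked against \cite{autrcikk}.
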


	It is not hard to see that this characterization again yields the following corollary (see \cite{autrcikk}):
	\begin{corollary}
		\label{c:autrcont}
		There are continuum many non-Haar null classes in $\aut(\mc{R})$ and their union is co-Haar null.
	\end{corollary}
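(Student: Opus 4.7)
The plan is to read both assertions off Theorem~\ref{t:randomintro}.

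For the claim that the union of non-Haar null classes is co-Haar null: by the first part of Theorem~\ref{t:randomintro}, the set $N = \{f \in \aut(\mc{R}) : f \text{ satisfies (1) and (2)}\}$ is co-Haar null, and the characterization in the second part identifies $N$ with the union of all non-Haar null conjugacy classes. Hence this union is co-Haar null.

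For the existence of continuum many non-Haar null classes, I would exhibit $2^{\aleph_0}$ pairwise non-conjugate elements of $N$. Define, for $f \in \aut(\mc{R})$ and an infinite orbit $O$ of $f$, the \emph{self-adjacency pattern} $\alpha_O \in 2^\Z$ by $\alpha_O(n) = 1$ iff $v\, R\, f^n(v)$, for any chosen $v \in O$. Because $f$ is a graph automorphism, a short calculation (applying $f^{-k}$) shows $\alpha_O$ is symmetric, satisfies $\alpha_O(0)=0$, and is independent of the chosen $v$. Moreover, if $g f g^{-1} = f'$ then $\alpha^{f'}_{g(O)} = \alpha^f_O$, since $g$ is a graph isomorphism, so the multiset of self-adjacency patterns of the infinite orbits of $f$ is a conjugacy invariant. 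Now, for each nontrivial symmetric $\sigma \colon \Z \to \{0,1\}$ with $\sigma(0) = 0$ (of which there are $2^{\aleph_0}$), a back-and-forth amalgamation inside $\mc{R}$ produces $f_\sigma \in \aut(\mc{R})$ having one distinguished infinite orbit of pattern $\sigma$, all remaining infinite orbits of identically zero pattern, no finite orbits, and enough witness vertices placed in fresh orbits to secure condition~(1); thus $f_\sigma \in N$. Distinct nontrivial $\sigma$ give distinct invariant multisets and hence pairwise non-conjugate $f_\sigma$, giving continuum many non-Haar null conjugacy classes.

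The main obstacle is the back-and-forth construction of $f_\sigma$. At each stage one must interleave three tasks: covering a new vertex of the target graph, inserting, for each finite disjoint pair $(A, B)$, a witness vertex $v$ with $v R A$, $v \nr B$ in a fresh orbit (so that (1) holds and the witness lies outside $\mathcal{O}^{f_\sigma}(A \cup B)$), and realizing the prescribed symmetric adjacency pattern along the distinguished orbit — all compatibly, so that the underlying graph remains isomorphic to $\mc{R}$ and the final bijection is a genuine automorphism. The ultrahomogeneity and the extension property of $\mc{R}$ guarantee that each finite step amalgamates into the age of $\mc{R}$, but the enumeration of requirements and the disjointness of orbits require careful bookkeeping.
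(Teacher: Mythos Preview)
Your argument is correct and follows the route the paper signals (the paper itself gives no proof here, merely citing \cite{autrcikk} and asserting the corollary follows from the characterization in Theorem~\ref{t:randomintro}). The first assertion is indeed immediate from the characterization, exactly as you say. For the second, encoding a symmetric binary sequence in the self-adjacency pattern of a distinguished infinite orbit is the natural conjugacy invariant to use, and your verification that $\alpha_O$ is well defined, symmetric, and preserved under conjugation is clean.

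The only real work, as you correctly flag, is the construction of $f_\sigma$. One small suggestion that makes the bookkeeping lighter: rather than carrying out a back-and-forth inside a fixed copy of $\mc{R}$, it is easier to build the graph and the automorphism simultaneously on the vertex set $\omega\times\Z$ with $f(i,n)=(i,n+1)$. The intra-orbit edges are then forced (pattern $\sigma$ on orbit $0$, no edges on the others), and the inter-orbit edges between orbits $i$ and $j$ are encoded by a single function $\tau_{ij}\colon\Z\to\{0,1\}$. One then enumerates all finite disjoint pairs $(A,B)$ and, for each, introduces a fresh orbit whose $\tau$'s are chosen to realize the required witness; this simultaneously secures the extension property of $\mc{R}$ and condition~(1), since the witness lies in a brand-new orbit. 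This avoids having to worry about compatibility of partial automorphisms with an ambient copy of $\mc{R}$ and makes the ``disjointness of orbits'' bookkeeping automatic.
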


	\subsection{Various behaviors}
	Examining any Polish group we can ask the following questions:
	
	\begin{question} 
		\label{q:how}
		\begin{enumerate}
		\item How many non-Haar null conjugacy classes are there?
		\item Is the union of the Haar null conjugacy classes Haar null?
		\end{enumerate}
	\end{question}

	Note that these are interesting even in compact groups. Table \ref{tab:1} summarizes our examples and the open questions as well (the left column indicates the number of non-Haar null conjugacy classes, while C, LC $\setminus$ C and NLC stands for compact, locally compact non-compact and non-locally compact groups, respectively). HNN denotes the well known infinite group, constructed by G. Higmann, B. H. Neumann and H. Neumann \cite{higman1949embedding}, with two conjugacy classes, while $\mathbb{Q}_d$ stands for the rationals with the discrete topology. The action, $\phi$, of $\mathbb{Z}_2$ on $\mathbb{Z}^\omega_3$ and $\mathbb{Q}^\omega_d$ is the map defined by $a \mapsto -a$.
	\begin{table}[h!]
		\begin{tabular}{ |c||c|c|c|  }
			\hline
			& \multicolumn{3}{c|}{The union of the Haar null classes  is Haar null}\\ \hline
			& C & LC $\setminus$ C & NLC \\ \hline
			$0$ & \bf{--} & \bf{--} & \bf{--} \\ \hline
			$n$ & $\mathbb{Z}_n$ & HNN & $???$ \\ \hline
			$\aleph_0$ & $???$ & $\mathbb{Z}$ & $S_\infty$   \\ \hline
			$\mathfrak{c}$ & \bf{--} & \bf{--} & $\aut(\mathbb{Q}, <)$; $\aut(\mc{R})$ \\ \hline
			& \multicolumn{3}{c|}{The union of the Haar null classes  is not Haar null} \\ \hline 
			& C & LC $\setminus$ C & NLC \\ \hline
			$0$ & $2^\omega$ & $\mathbb{Z} \times 2^\omega$ & $\mathbb{Z}^\omega$ \\ \hline
			$n$ & $\mathbb{Z}_n \times(\mathbb{Z}_2 \ltimes_\phi \mathbb{Z}_3^\omega)$ & 
			HNN $\times (\mathbb{Z}_2 \ltimes_\phi \mathbb{Z}^\omega_3)$ & 
			$\mathbb{Z}_n \times (\mathbb{Z}_2 \ltimes_\phi \mathbb{Q}_d^\omega)$ \\ \hline
			$\aleph_0$ & $???$ & $\mathbb{Z} \times( \mathbb{Z}_2 \ltimes_\phi \mathbb{Z}_3^\omega)$ & 
			$S_\infty \times (\mathbb{Z}_2\ltimes_\phi \mathbb{Z}_3^\omega)$ \\ \hline
			$\mathfrak{c}$ & \bf{--} & \bf{--} & $\aut(\mathbb{Q}, <) \times (\mathbb{Z}_2 \ltimes_\phi \mathbb{Z}_3^\omega)$
			\\ \hline
			
		\end{tabular}
		\vspace{5mm}
		\caption{Examples of various behaviors}
		\label{tab:1}
	\end{table}
	
	\section{Main results}
	\label{s:genres}
	
	This section contains our generalization of the result of Dougherty and Mycielski to automorphism groups of countable structures. For the sake of simplicity we will use the following notation.
	
	\begin{definition}
		Let $G$ be a closed subgroup of $S_\infty$ and let $S \subset \omega$ be a 
		finite subset. The \emph{group-theoretic algebraic closure} of $S$ is:
		$$
		\ACL(S)=\{x \in \omega : \text{the orbit of $x$ under $G_{(S)}$ is finite}\}.
		$$
		
	\end{definition}
	
	Obviously $G$ has the finite algebraic closure property (see Definition \ref{NACdef}) if and only if for every finite set $S$ the set $\ACL(S)$ is finite. We start with proving a simple observation about the operator $\ACL$.

	\begin{lemma}\label{l:idempotentness}
		If a group $G$ has the $FACP$ then the corresponding operator $\ACL$ is 
		idempotent.
	\end{lemma}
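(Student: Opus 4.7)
The plan is to show both inclusions $\ACL(S) \subseteq \ACL(\ACL(S))$ and $\ACL(\ACL(S)) \subseteq \ACL(S)$. The first is completely formal: each $s \in S$ has trivial orbit $\{s\}$ under $G_{(S)}$, so $S \subseteq \ACL(S)$, and then any $x \in \ACL(S)$ has trivial orbit $\{x\}$ under $G_{(\ACL(S))}$, giving $\ACL(S) \subseteq \ACL(\ACL(S))$. This does not even use the $FACP$.

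For the nontrivial direction, set $T = \ACL(S)$, which is a finite set by the $FACP$. The crucial structural fact I would establish first is that $T$ is invariant under the action of $G_{(S)}$: by definition $T$ is a union of $G_{(S)}$-orbits (indeed, exactly the union of the finite ones), so $G_{(S)}$ permutes $T$. This yields a group homomorphism $\rho : G_{(S)} \to \operatorname{Sym}(T)$, whose kernel is precisely $G_{(T)}$. Since $\operatorname{Sym}(T)$ is finite, $G_{(T)}$ has finite index in $G_{(S)}$.

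Now let $x \in \ACL(T) = \ACL(\ACL(S))$, so the orbit of $x$ under $G_{(T)}$ is finite. Choose finitely many coset representatives $g_1, \ldots, g_k \in G_{(S)}$ of $G_{(T)}$. Then
\[
G_{(S)}(x) \;=\; \bigcup_{i=1}^{k} g_i \cdot G_{(T)}(x),
\]
which is a finite union of finite sets, hence finite. Therefore $x \in \ACL(S)$, as required.

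I expect no serious obstacle here; the only subtle point is the observation that $T = \ACL(S)$ is $G_{(S)}$-invariant, which makes the coset argument go through. The proof highlights the essential content of the $FACP$: it allows one to replace the ``topological'' notion of finite orbit by a ``group-theoretic'' one controlled by a subgroup of finite index.
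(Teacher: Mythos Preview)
Your proof is correct and follows essentially the same route as the paper: both establish that $G_{(\ACL(S))}$ has finite index in $G_{(S)}$ and then use a coset decomposition to conclude that any $G_{(\ACL(S))}$-finite orbit is also $G_{(S)}$-finite. The only cosmetic difference is in how finite index is obtained: the paper applies the Orbit--Stabiliser Theorem to the diagonal action of $G_{(S)}$ on the finite set $\ACL(S)^k$, whereas you observe directly that $T=\ACL(S)$ is $G_{(S)}$-invariant and take the kernel of the resulting homomorphism $G_{(S)}\to\operatorname{Sym}(T)$; these are two phrasings of the same fact.
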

	\begin{proof}
		We have to show that for every finite set $S \subset \omega$ the identity 
		$\ACL(\ACL(S)) = \ACL(S)$ holds. Let $S \subset \omega$ be an arbitrary finite 
		set and let $x \in \ACL(\ACL(S))$. We will show that $x$ has a finite orbit 
		under $G_{(S)}$ which implies $x \in \ACL(S)$.
		
		It is enough to show that $G_{(S)}(x)$ is finite. Enumerate the elements of $\ACL(S)$ 
		as $\{x_1, x_2 ,\dots, x_k\}$. The group 
		$G_{(S)}$ acts on $\ACL(S)^k$ coordinate-wise. Under this group action 
		the stabiliser of the tuple $(x_1, x_2, \dots, x_k)$ is $G_{(\ACL(S))}$. The 
		Orbit-Stabiliser Theorem states that for any group action the index of the 
		stabiliser of an element in the whole group is the same as the cardinality of 
		its orbit. This yields that the index $[G_{(S)} : G_{(\ACL(S))}]$ is the same 
		as the cardinality of the orbit of $(x_1, x_2, 
		\dots, x_k)$. This orbit is finite because the whole space $\ACL(S)^k$ is 
		finite. So $G_{(\ACL(S))}$ has finite index in $G_{(S)}$.
		
		Let $g_1, g_2, \dots, g_n \in G_{(S)}$ be a left transversal for 
		$G_{(\ACL(S))}$ in $G_{(S)}$, then $G_{(S)} = g_1 G_{ACL(S)} \cup \dots \cup 
		g_n G_{ACL(S)}$. Since $G_{(S)}(x) = g_1 G_{(\ACL(S))}(x) \cup g_2 
		G_{(\ACL(S))}(x) \cup \dots \cup g_n G_{(\ACL(S))}(x)$ is a finite union of 
		finite sets, it must be finite.
	\end{proof}
	
	\begin{lemma}\label{l:ACLtrans}
		The operator $\ACL$ is translation invariant in the following sense: if $S 
		\subset \omega$ is a finite set and $g \in G$ then
		$$\ACL(gS) = g\ACL(S).$$
	\end{lemma}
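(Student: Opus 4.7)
The plan is to derive the identity from a clean conjugation relation between pointwise stabilizers, namely $G_{(gS)} = g G_{(S)} g^{-1}$. This is the only real content; the rest is a short computation with orbits.

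First I would verify the conjugation identity: for $h \in G$, $h$ fixes $gS$ pointwise iff $h(g(s)) = g(s)$ for every $s \in S$, iff $g^{-1} h g \in G_{(S)}$, iff $h \in g G_{(S)} g^{-1}$. Since $g \in G$, conjugation by $g$ is an automorphism of $G$, so this gives $G_{(gS)} = g G_{(S)} g^{-1}$ as groups.

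Next, given any $x \in \omega$, I would compute the orbit
\[
G_{(gS)}(x) = (g G_{(S)} g^{-1})(x) = g \cdot G_{(S)}(g^{-1}(x)),
\]
so $|G_{(gS)}(x)| = |G_{(S)}(g^{-1}(x))|$, and in particular one is finite iff the other is. Hence $x \in \ACL(gS)$ iff $g^{-1}(x) \in \ACL(S)$, i.e.\ $x \in g\ACL(S)$, which gives the desired equality $\ACL(gS) = g\ACL(S)$.

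There is no real obstacle here; the only thing to be careful about is that the statement uses $g \in G$ (not merely $g \in S_\infty$), which is precisely what ensures that conjugation by $g$ sends $G_{(S)}$ into $G$ and that the orbit computation takes place inside $G$. I would flag this hypothesis once and otherwise present the argument as the three lines above.
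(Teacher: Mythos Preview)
Your proof is correct and takes essentially the same approach as the paper: both rest on the conjugation identity $G_{(gS)} = gG_{(S)}g^{-1}$ and the resulting bijection between $G_{(S)}$-orbits and $G_{(gS)}$-orbits. The paper phrases this as a chain of equivalences showing $x,y$ lie in the same $G_{(S)}$-orbit iff $g(x),g(y)$ lie in the same $G_{(gS)}$-orbit, while you compute the orbit directly as $G_{(gS)}(x) = g\,G_{(S)}(g^{-1}x)$; these are the same argument.
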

	\begin{proof}
		Let $x\in \omega$ be an arbitrary element, then
		\begin{multline*}
		x \mbox{ and } y \mbox{ are in the same orbit under } G_{(S)} \Leftrightarrow \\
		\exists h \in G_{(S)} : h(y) = x \Leftrightarrow \exists h \in G_{(S)} : gh(y) 
		= g(x) \Leftrightarrow \\ \exists h \in G_{(S)} : ghg^{-1}(g(y)) 
		= g(x) \Leftrightarrow \exists f \in G_{(gS)} : f(g(y)) = g(x) 
		\Leftrightarrow \\
		g(x) \mbox{ and } g(y) \mbox{ are in the same orbit under } G_{(gS)}.
		\end{multline*}
		So an element $x$ has finite orbit under $G_{(S)}$ if and only if $g(x)$ has finite orbit under $G_{(gS)}$.
	\end{proof}
	
	Now we describe a process to generate a probability measure on $G$, a closed 
	subgroup of $S_\infty$ that has the $FACP$. This probability measure will witness that certain sets are Haar null (see Theorem \ref{t:DM}). 
	
	Our random process will define a permutation $p \in G$ in stages. It depends on 
	integer sequences $(M_{i})_{i \in \omega}$ and $(N_i)_{i \in \omega}$ with 
	$M_i, N_i \ge 1$.
	
	We denote the partial permutation completed in stage $i$ by $p_i$. We start 
	with $p_0 = \id_{ACL(\emptyset)}$ and maintain throughout the following Property 
	\ref{dm:extension} for every $i \ge 1$, and Properties 
	\ref{dm:algebraically closed} and \ref{dm:partial permutation} for $i \in 
	\omega$:
	
	\begin{enumerate}[label=(\roman*)]
		\item\label{dm:extension} $p_{i - 1} \subset p_{i}$,
		\item\label{dm:algebraically closed} $\dom(p_i)$ and $\ran(p_i)$ are finite 
		sets such that $\ACL(\dom(p_i)) = \dom(p_i)$, $\ACL(\ran(p_i) )= \ran(p_i)$,
		\item\label{dm:partial permutation} there is a permutation $g \in G$ that 
		extends $p_i$.
	\end{enumerate}
	
	Let $O_0, O_1, \ldots \subset \omega$ be a sequence of infinite sets with the 
	property that for every finite set $F \subset \omega$ and every infinite orbit 
	$O$ of $G_{(F)}$, the sequence $(O_i)_{i \in \omega}$ contains $O$ infinitely 
	many times. It is easy to see that such a sequence exists, since there exists 
	only countably many such finite sets $F$, and for each one, there exist only 
	countably many orbits of $G_{(F)}$. 
	
	At stage $i \ge 1$, we proceed the following way. First suppose that $i$ is 
	even. We now choose a set $S_i \subset \omega$ with $|S_i| = M_i$ such that 
	$S_i \cap \ran(p_{i - 1}) = \emptyset$. If $i \equiv 0 \pmod{4}$ we require 
	that $S_i$ contains the least $M_i$ elements of 
	$\omega \setminus \ran(p_{i - 1})$, and if $i \equiv 2 \pmod{4}$ we require 
	that $S_i$ contains the least $M_i$ elements of $O_{(i - 2) / 4} \setminus \ran(p_{i - 1})$. 
	Now we will extend $p_{i - 1}$ to a partial permutation $p_i$ such that 
	\begin{equation}
	\label{e:ran(p_i) = ACL(...)}
	\ran(p_i)= \ACL(\ran(p_{i - 1})\cup S_i).
	\end{equation}
	Let us enumerate the elements of $\ACL(\ran(p_{i - 1})\cup S_i) \setminus 
	\ran(p_{i - 1})$ as $(x_1, \dots, x_j)$ such that if $x_1, \dots, x_{k - 1}$ 
	are already chosen then we choose $x_k$ so that 
	\begin{equation}
	\label{e:ordering chosen to be minimal}
	\text{$\ACL(\ran(p_{i - 1}) \cup \{x_1, \ldots, x_k\})$ is minimal with 
		respect to inclusion.}
	\end{equation} 
	
	\begin{claim}
		\label{c:ordering is nice for ran}
		For every $1 \le k \le \ell \le m \le j$, if 
		\begin{equation}
		\label{e:x_m in ACL(..)}
		x_m \in \ACL(\ran(p_{i - 1}) \cup \{x_1, \dots, x_{k}\})
		\end{equation}
		then $$x_\ell \in \ACL(\ran(p_{i - 1}) \cup \{x_1, \dots, x_{k - 1}\} \cup 
		\{x_m\}) \subset \ACL(\ran(p_{i - 1}) \cup \{x_1, \dots, x_{k}\}).$$
		Thus, letting  $\ell=k$ yields  $$ \ACL(\ran(p_{i - 1}) \cup \{x_1, \dots, x_{k - 1}\} \cup 
		\{x_m\}) = \ACL(\ran(p_{i - 1}) \cup \{x_1, \dots, x_{k}\}).$$
	\end{claim}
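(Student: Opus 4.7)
The plan is to prove, by induction on $\ell \in \{k, k+1, \ldots, m\}$, the stronger identity
\[
\ACL(R \cup \{x_1, \ldots, x_{k-1}, x_m\}) \;=\; T_\ell,
\]
where for convenience I write $R := \ran(p_{i-1})$ and $T_\ell := \ACL(R \cup \{x_1, \ldots, x_\ell\})$. Once this is established for every such $\ell$, the first assertion of the claim follows because $x_\ell \in T_\ell$ trivially, and the displayed inclusion $\ACL(R \cup \{x_1, \ldots, x_{k-1}, x_m\}) \subseteq T_k$ is just the identity at $\ell = k$; in any case it can be read off directly from $x_m \in T_k$ together with idempotence (Lemma \ref{l:idempotentness}).

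For the base case $\ell = k$, the inclusion ``$\subseteq$'' follows from $R \cup \{x_1, \ldots, x_{k-1}, x_m\} \subseteq T_k$ (using $x_m \in T_k$) and Lemma \ref{l:idempotentness}. The reverse inclusion is the only place where the choice rule (\ref{e:ordering chosen to be minimal}) is used: at step $k$, the element $x_m$ is a legitimate candidate for $x_k$ (it lies in $\ACL(R \cup S_i) \setminus R$ and, the $x_r$ being distinct with $m \ge k$, does not belong to $\{x_1, \ldots, x_{k-1}\}$), and by construction $x_k$ was chosen to make $T_k$ minimal under inclusion among all such candidates. Hence $\ACL(R \cup \{x_1, \ldots, x_{k-1}, x_m\})$ cannot be a proper subset of $T_k$, forcing equality.

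For the inductive step I would assume $A := \ACL(R \cup \{x_1, \ldots, x_{k-1}, x_m\}) = T_{\ell - 1}$ with $\ell - 1 \in \{k, \ldots, m-1\}$. Then $x_{\ell - 1} \in T_{\ell - 1} = A$ and $x_m \in T_k \subseteq T_{\ell - 1} = A$, so $R \cup \{x_1, \ldots, x_{\ell - 1}, x_m\} \subseteq A$, and idempotence gives $\ACL(R \cup \{x_1, \ldots, x_{\ell - 1}, x_m\}) \subseteq A \subseteq T_\ell$. Invoking (\ref{e:ordering chosen to be minimal}) at step $\ell$ --- where $x_m$ is again a valid candidate since $m \ge \ell$ --- rules out strict inclusion, so $\ACL(R \cup \{x_1, \ldots, x_{\ell - 1}, x_m\}) = T_\ell$, and in particular $T_\ell \subseteq A$. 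The reverse $A \subseteq T_\ell$ comes from $x_m \in T_k \subseteq T_\ell$ and idempotence, exactly as in the base case. Thus $A = T_\ell$, completing the induction. The only real subtlety is the bookkeeping: one must verify at each stage that $x_m$ remains a valid candidate, and carefully separate the ``$\subseteq$ from idempotence'' half from the ``$\not\subsetneq$ from minimality'' half.
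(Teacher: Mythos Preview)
Your proof is correct and follows essentially the same route as the paper's: both arguments hinge on combining idempotence of $\ACL$ (Lemma~\ref{l:idempotentness}) with the minimality rule \eqref{e:ordering chosen to be minimal} applied at step~$\ell$ with $x_m$ as an alternative candidate. The only difference is presentational: the paper argues by minimal counterexample (take the least $\ell$ with $x_\ell \notin A$ and derive a contradiction to \eqref{e:ordering chosen to be minimal}), whereas you run the equivalent forward induction, proving the slightly stronger identity $A = T_\ell$ along the way.
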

	\begin{proof}
		The last containment holds, using Lemma \ref{l:idempotentness} and 
		\eqref{e:x_m in ACL(..)}. If $\ell = m$ then there is nothing to prove. Now 
		suppose towards a contradiction that there exists $\ell < m$ violating the 
		statement of the claim, and suppose that $\ell$ is minimal with $k \le \ell 
		< m$ and  
		\begin{equation}
		\label{e:x_ell not in ACL(...)}
		x_{\ell} \notin \ACL(\ran(p_{i - 1}) \cup \{x_1, \ldots, x_{k - 1}\} \cup 
		\{x_m\}).
		\end{equation}
		
		Using the minimality of $\ell$, $\{x_1, \ldots, x_{\ell - 1}\} \subset 
		\ACL(\ran(p_{i - 1}) \cup \{x_1, \ldots, x_{k - 1}\} \cup \{x_m\})$, thus 
		an application of Lemma \ref{l:idempotentness} and the fact that $k \le \ell$ 
		shows that $\ACL(\ran(p_{i - 1}) \cup \{x_1, \ldots, x_{k - 1}\} \cup 
		\{x_m\}) = \ACL(\ran(p_{i - 1}) \cup \{x_1, \ldots, x_{\ell - 1}\} \cup 
		\{x_m\})$. By \eqref{e:x_ell not in ACL(...)} it follows that $x_{\ell} 
		\notin \ACL(\ran(p_{i - 1}) \cup \{x_1, \ldots, x_{\ell - 1}\} \cup 
		\{x_m\})$. Using this, the fact that $k \le \ell$ and \eqref{e:x_m in 
			ACL(..)}, $\ACL(\ran(p_{i - 1}) \cup \{x_1, \ldots, x_{\ell - 1}\} \cup 
		\{x_m\}) \subsetneqq \ACL(\ran(p_{i - 1}) \cup \{x_1, \ldots, x_{\ell}\})$ 
		contradicting \eqref{e:ordering chosen to be minimal}, since $x_{\ell}$ was 
		chosen after $\{x_1, \dots, x_{\ell - 1}\}$ to satisfy that $\ACL(\ran(p_{i - 
			1}) \cup \{x_1, \ldots, x_{\ell}\})$ is minimal. 
	\end{proof}
	
	We will determine the preimages of $(x_1, x_2, \ldots, x_j)$ in this order. 
	Denote the partial permutations defined in these sub-steps by $p_{i, k}$ so 
	that $\ran(p_{i, k}) = \ran(p_{i - 1}) \cup \{x_1, \dots, x_k\}$ for $k = 0, 
	\dots, 
	j$. If the first $k$ preimages are determined then there are two possibilities 
	for $x_{k + 1}$:
	\begin{itemize}
		\item[(a)] The set of possible preimages of $x_{k + 1}$ under $p_{i, k}$ is 
		finite, that is, the set $\{g^{-1}(x_{k + 1}) : g \in G, g \supset p_{i, k}\}$ 
		is finite. Then choose one from them randomly with uniform distribution.
		\item[(b)] The set of possible preimages of $x_{k + 1}$ under $p_{i, k}$ is 
		infinite. Then choose one from the smallest $N_i$ many possible values 
		uniformly. 
	\end{itemize}
	We note that the orbit of $x_k$ under the stabiliser $G_{(\ran(p_{i - 1}))}$ is 
	infinite because $x_k \notin \ran(p_{i - 1}) = \ACL(\ran(p_{i - 1}))$ so 
	\begin{equation}
	\label{e:possibility (b) for x_1}
	\text{possibility (b) must occur for at least $x_1$.}
	\end{equation}

	Let $p_i = p_{i, j}$. Properties \ref{dm:extension} and \ref{dm:partial 
		permutation} obviously hold for $i$. Let $g \in G$ be a permutation with $g 
	\supset p_i$. Now $\ran(p_i) = \ACL(\ran(p_i))$ using \eqref{e:ran(p_i) = 
		ACL(...)} and Lemma \ref{l:idempotentness}. Then $\dom(p_i) = g^{-1} 
	\ran(p_i)$, hence using Lemma \ref{l:ACLtrans}, $\ACL(\dom(p_i)) = \ACL(g^{-1} 
	\ran(p_i)) = g^{-1}\ACL(\ran(p_i)) = g^{-1}\ran(p_i) = \dom(p_i)$, showing 
	Property \ref{dm:algebraically closed}. This concludes the case where $i$ is 
	even. 
	
	If $i$ is odd we let $S_i \subset \omega$ be the set of the least $M_i$ 
	elements of $\omega \setminus \dom(p_{i - 1})$, if $i \equiv 1 \pmod{4}$ 
	and the least $M_i$ elements of $O_{(i - 3) / 4} \setminus \dom(p_{i - 1})$, 
	if $i \equiv 3 \pmod{4}$. We extend $p_{i - 1}$ to a partial permutation $p_i$ such that 
	\begin{equation}
	\label{e:dom(p_i) = ACL(...)}
	\dom(p_i)=\ACL(\dom(p_{i - 1}) \cup S_i).
	\end{equation}
	Again, we enumerate the elements of $\ACL(\dom(p_{i - 1}) \cup S_i) \setminus 
	\dom(p_{i - 1})$ as $(x_1, \dots, x_j)$ such that if $x_1, \dots, x_{k - 1}$ 
	are already chosen then we choose $x_k$ from the rest so that $\ACL(\dom(p_{i - 
		1}) \cup \{x_1, \ldots, x_k\})$ is minimal with respect to inclusion. The proof 
	of the following claim is analogous to the proof of Claim \ref{c:ordering is 
		nice for ran}.
	
	\begin{claim}
		\label{c:ordering is nice for dom}
		For every $1 \le k \le \ell \le m \le j$, $x_m \in \ACL(\dom(p_{i - 1}) 
		\cup \{x_1, \dots, x_{k}\})$ implies $x_\ell \in \ACL(\dom(p_{i - 1}) \cup 
		\{x_1, \dots, x_{k - 1}\} \cup \{x_m\}) \subset \ACL(\dom(p_{i - 1}) \cup 
		\{x_1, \dots, x_{k}\})$. Thus, letting $\ell=k$ yields $ \ACL(\ran(p_{i - 1}) \cup \{x_1, \dots, x_{k - 1}\} \cup 
		\{x_m\}) = \ACL(\ran(p_{i - 1}) \cup \{x_1, \dots, x_{k}\}).$
	\end{claim}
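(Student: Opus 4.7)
The plan is to carry over the argument for Claim \ref{c:ordering is nice for ran} essentially verbatim, with $\dom(p_{i-1})$ replacing $\ran(p_{i-1})$ throughout. The only ingredients used in that proof are (a) the idempotence of $\ACL$ (Lemma \ref{l:idempotentness}), and (b) the selection rule for the $x_k$, which in the odd case states that $x_k$ is chosen from the remaining elements of $\ACL(\dom(p_{i-1}) \cup S_i) \setminus \dom(p_{i-1})$ so as to minimize $\ACL(\dom(p_{i-1})\cup\{x_1,\ldots,x_k\})$ under inclusion. Both ingredients are available in the odd case in exactly the same form as in the even case, so no new content is required.

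First I would dispose of the right-hand containment: since by hypothesis $x_m \in \ACL(\dom(p_{i-1})\cup\{x_1,\ldots,x_k\})$, we have
\[\dom(p_{i-1})\cup\{x_1,\ldots,x_{k-1}\}\cup\{x_m\} \subset \ACL(\dom(p_{i-1})\cup\{x_1,\ldots,x_k\}),\]
so applying $\ACL$ to both sides and invoking Lemma \ref{l:idempotentness} gives the desired inclusion (and, with $\ell = k$, the promised equality).

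For the main containment $x_\ell \in \ACL(\dom(p_{i-1})\cup\{x_1,\ldots,x_{k-1}\}\cup\{x_m\})$, I would argue by contradiction. If $\ell = m$ there is nothing to prove; otherwise let $\ell$ be the smallest index with $k \le \ell < m$ for which $x_\ell$ fails to lie in $\ACL(\dom(p_{i-1})\cup\{x_1,\ldots,x_{k-1}\}\cup\{x_m\})$. Minimality of $\ell$ gives $\{x_1,\ldots,x_{\ell-1}\} \subset \ACL(\dom(p_{i-1})\cup\{x_1,\ldots,x_{k-1}\}\cup\{x_m\})$, and a further application of Lemma \ref{l:idempotentness} rewrites this set as $\ACL(\dom(p_{i-1})\cup\{x_1,\ldots,x_{\ell-1}\}\cup\{x_m\})$. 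By the choice of $\ell$, $x_\ell$ is missing from this closure, yet $x_m$ belongs to $\ACL(\dom(p_{i-1})\cup\{x_1,\ldots,x_\ell\})$ (by the hypothesis and $k \le \ell$), forcing the strict inclusion
\[\ACL(\dom(p_{i-1})\cup\{x_1,\ldots,x_{\ell-1}\}\cup\{x_m\}) \subsetneqq \ACL(\dom(p_{i-1})\cup\{x_1,\ldots,x_\ell\}).\]
This directly contradicts the selection rule for $x_\ell$, since $x_m$ was still an eligible candidate at that step (as $\ell < m$) and would have produced a strictly smaller $\ACL$-closure than $x_\ell$ did.

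The only subtlety — the same one present in Claim \ref{c:ordering is nice for ran} — is verifying that $x_m$ really was available at the stage where $x_\ell$ was picked; but this is automatic from $\ell < m$, since the enumeration $(x_1,\ldots,x_j)$ removes an element from the pool of candidates only after it is chosen. No additional obstacles appear in transporting the argument from $\ran$ to $\dom$.
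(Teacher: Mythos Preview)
Your proposal is correct and follows exactly the approach intended by the paper, which simply states that the proof of this claim is analogous to that of Claim \ref{c:ordering is nice for ran}; you have faithfully transported that argument with $\dom(p_{i-1})$ in place of $\ran(p_{i-1})$ and the odd-stage selection rule in place of the even-stage one.
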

	
	We determine the images of $(x_1, x_2, \ldots, x_j)$ in this order. 
	Denote the partial permutations defined in these sub-steps by $p_{i, k}$ so 
	that $\dom(p_{i, k}) = \dom(p_{i - 1}) \cup \{x_1, \dots, x_k\}$ for $k = 0, 
	\dots, j$. If the first $k$ images are determined then there are two 
	possibilities for $x_{k + 1}$:
	\begin{itemize}
		\item[(a)] The set of possible images of $x_{k + 1}$ under $p_{i, k}$ is 
		finite, that is, the set $\{g(x_{k + 1}) : g \in G, g \supset p_{i, k}\}$ 
		is finite. Then choose one from them randomly with uniform distribution.
		\item[(b)] The set of possible images of $x_{k + 1}$ under $p_{i, k}$ is 
		infinite. Then choose one from the smallest $N_i$ many possible values 
		uniformly. 
	\end{itemize}
	Again, the orbit of $x_k$ under the stabiliser $G_{(\dom(p_{i - 1}))}$ is 
	infinite because $x_k \notin \dom(p_{i - 1}) = \ACL(\dom(p_{i - 1}))$ for every 
	$k$, so possibility (b) must occur for at least $x_1$.
	
	Let $p_i = p_{i, j}$. Again, Properties \ref{dm:extension} and 
	\ref{dm:partial permutation} hold for $i$. Let $g \in G$ be a permutation 
	with $g \supset p_i$. Now $\dom(p_i) = \ACL(\dom(p_i))$ using \eqref{e:dom(p_i) 
		= ACL(...)} and Lemma \ref{l:idempotentness}. Then using Lemma 
	\ref{l:ACLtrans}, $\ACL(\ran(p_i)) = \ACL(g\dom(p_i)) = g\ACL(\dom(p_i)) = 
	g\dom(p_i) = \ran(p_i)$, showing Property \ref{dm:algebraically closed}. This 
	concludes the construction for odd $i$. 
	
	Now let $p = \bigcup_i p_i$. This makes sense using \ref{dm:extension}. 
	
	\begin{claim}
		$p \in G$.
	\end{claim}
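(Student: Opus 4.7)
The plan is to show that $p$ is a bijection of $\omega$ onto itself and arises as a pointwise limit of elements of $G$; since $G$ is closed in $S_\infty$, this forces $p \in G$.

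First I would verify that $\dom(p) = \ran(p) = \omega$. By property \ref{dm:extension} the $p_i$ form an increasing chain of partial permutations, so $p$ is automatically a well-defined injection on $\dom(p)$. To see $\dom(p) = \omega$, note that at every stage $i$ with $i \equiv 1 \pmod 4$ the set $S_i$ contains the least $M_i \ge 1$ elements of $\omega \setminus \dom(p_{i - 1})$, so each such stage strictly decreases the least element missing from $\dom(p_{i - 1})$. A straightforward induction shows that every $n \in \omega$ eventually belongs to $\dom(p_i)$ for some $i$, and hence to $\dom(p)$. The symmetric argument, using stages $i \equiv 0 \pmod 4$, gives $\ran(p) = \omega$. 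Thus $p$ is a bijection of $\omega$.

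Next, by property \ref{dm:partial permutation}, for each $i$ there exists $g_i \in G$ with $g_i \supset p_i$. I would then show $g_i \to p$ in $S_\infty$. Fix $n \in \omega$; by the previous paragraph there exists $i_0$ with $n \in \dom(p_{i_0}) \cap \ran(p_{i_0})$. For every $j \ge i_0$ we have $g_j(n) = p_j(n) = p(n)$ and $g_j^{-1}(n) = p_j^{-1}(n) = p^{-1}(n)$, which is exactly convergence in the Polish group topology of $S_\infty$ (pointwise convergence of the permutations together with their inverses).

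Finally, since $G$ is a closed subgroup of $S_\infty$ and each $g_i \in G$, we conclude $p = \lim_i g_i \in G$. The argument is essentially a wrap-up of the construction, so I do not foresee any real obstacle; the only subtlety is checking that both $\dom(p) = \omega$ and $\ran(p) = \omega$, which is why the construction alternated the parity of $i$ and why we insisted $M_i \ge 1$ at each stage.
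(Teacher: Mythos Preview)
Your proposal is correct and follows essentially the same approach as the paper: show $p$ is a bijection of $\omega$ by checking the construction eventually covers every point in both domain and range, then take $g_i \in G$ extending $p_i$ and use closedness of $G$ to conclude $p = \lim_i g_i \in G$. The paper phrases the coverage check as $\{0,1,\dots,i-1\}\subset \dom(p_{4i})\cap\ran(p_{4i})$, which is exactly what your stage-by-stage argument yields; one small wording slip is that each stage $i\equiv 1\pmod 4$ strictly \emph{increases} (not decreases) the least element missing from the domain.
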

	\begin{proof}
		First we show that $p \in S_\infty$. Using \ref{dm:partial permutation}, 
		each $p_i$ is a partial permutation, hence injective. Using 
		\ref{dm:extension}, $p$ is the union of compatible injective functions, 
		hence $p$ is an injective function. 
		It is clear from the construction that $\{0, 1, \dots, i - 1\} \subset 
		\dom(p_{4i}) \cap \ran(p_{4i})$ for every $i$, hence $p \in S_\infty$. 
		
		Using \ref{dm:partial permutation}, we can find an element $g_i \in G$ such 
		that $g_i \supset p_i$. It is clear that $g_i \to p$, and since $G$ is a 
		closed subgroup of $S_\infty$, we conclude that $p \in G$. 
	\end{proof}
	
	The following lemma is crucial in proving that almost every element of $G$ 
	has finitely many finite and infinitely many infinite orbits. 
	
	\begin{lemma}\label{l:badelements}
		Suppose that the parameters of the random process $M_1, \dots, M_{i}$ and 
		$N_1, \dots, N_{i - 1}$ are given along with the numbers $K \in \omega$ and 
		$\varepsilon > 0$. Then we can choose $N_i$ so that for every set $S \subset 
		\omega$ with $|S| = K$, the probability that $S \cap (\dom(p_i) 
		\setminus \dom(p_{i - 1})) \neq \emptyset$ if $i$ is even, or that $S \cap 
		(\ran(p_i) \setminus \ran(p_{i - 1})) \neq \emptyset$ if $i$ is odd, is at 
		most $\varepsilon$.
	\end{lemma}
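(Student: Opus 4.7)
My plan is to bound, for each $s \in S$, the probability $P[s \in \dom(p_i) \setminus \dom(p_{i-1})]$ by $\varepsilon/K$, and then apply the union bound over the $K$ elements of $S$. I treat the even case; the odd case is entirely symmetric.

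First I would observe that since $p_i$ is injective, $|\dom(p_i) \setminus \dom(p_{i-1})| = |\ran(p_i) \setminus \ran(p_{i-1})| = j$, and because $y_1, \dots, y_j$ are $j$ distinct elements of $\dom(p_i) \setminus \dom(p_{i-1})$, we have $\dom(p_i) \setminus \dom(p_{i-1}) = \{y_1, \dots, y_j\}$. Thus it suffices to control $P[s \in \{y_1, \dots, y_j\}]$ for each $s \in S$ (and we may assume $s \notin \dom(p_{i-1})$, else the probability is zero).

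Next I would exploit the batch structure implicit in Claim \ref{c:ordering is nice for ran}. Call a step $k$ a \emph{batch starter} when possibility (b) occurs there; equivalently, when $x_k$ strictly enlarges $\ACL(\ran(p_{i-1}) \cup \{x_1, \dots, x_{k-1}\})$. At non-starter steps possibility (a) applies, so the chosen $y_k$ lies in $\ACL(\dom(p_{i, k-1}))$ by Lemma \ref{l:ACLtrans}. Consequently, if $s = y_k$ at a non-starter step, then $s \in \ACL(\dom(p_{i, k-1}))$; letting $k^*$ be the least index with $s \in \ACL(\dom(p_{i, k^*}))$, idempotence of $\ACL$ (Lemma \ref{l:idempotentness}) together with the fact that non-starter steps do not enlarge the closure forces $k^*$ to be a batch starter, and at that step either $y_{k^*} = s$ or $s \in \ACL(\dom(p_{i, k^* - 1}) \cup \{y_{k^*}\}) \setminus \ACL(\dom(p_{i, k^* - 1}))$. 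Therefore the event $\{s \in \{y_1, \dots, y_j\}\}$ is contained in the union, over the (deterministic, given $p_{i-1}$) batch-starter indices $k^*$, of events saying that $y_{k^*}$ falls into a "bad" subset $B_{k^*, s} \subseteq T_{k^*}$, where $T_{k^*}$ is the set of the $N_i$ smallest possible preimages. Since $y_{k^*}$ is uniform on $T_{k^*}$, we have $P[y_{k^*} \in B_{k^*, s}] = |B_{k^*, s}|/N_i$.

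The main step, and the principal obstacle, is to establish a uniform bound $|B_{k^*, s}| \le C$, with $C$ depending only on the given parameters $M_1, \dots, M_i, N_1, \dots, N_{i-1}$, the group $G$ and the structure, but not on $N_i$. The contribution from the trivial "$y_{k^*} = s$" is at most one; the hard contribution is the set of $y \in T_{k^*}$ with $s \in \ACL(\dom(p_{i, k^* - 1}) \cup \{y\}) \setminus \ACL(\dom(p_{i, k^* - 1}))$. To bound this, I would analyze the action of $G_{(\dom(p_{i, k^* - 1}))}$ on the infinite orbit whose $N_i$ smallest members form $T_{k^*}$, invoking the FACP to ensure that the closures $\ACL(\dom(p_{i, k^* - 1}) \cup \{y\})$ are of bounded finite size as $y$ varies and then counting, via orbit-stabiliser type arguments, how many such $y$'s can drag a given $s$ into the closure. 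Granted this $C$, and noting that the number $B$ of batches is also deterministically bounded in terms of the given parameters (because $B \le j \le |\ACL(\ran(p_{i-1}) \cup S_i)|$ and the size of $\ran(p_{i-1})$ is bounded inductively), we sum $|B_{k^*, s}|/N_i$ over the $B$ batches and then over the $K$ elements of $S$ to get a total of at most $CBK/N_i$. Choosing $N_i \ge CBK/\varepsilon$ completes the proof.
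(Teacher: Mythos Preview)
Your outline is correct and, once the ``principal obstacle'' is filled in, becomes essentially the paper's proof viewed in transpose: the paper fixes each $x_m$ and bounds $\mathbb{P}(p_i^{-1}(x_m)\in S)$, then sums over $m\le j$; you fix each $s\in S$ and bound $\mathbb{P}(s\in\{y_1,\dots,y_j\})$, then sum over $s$. The underlying computation is the same. Concretely, for $y$ a candidate value of $y_{k^*}$ and $g\supset p_{i,k^*-1}$ with $g(y)=x_{k^*}$, one has $\ACL(\dom(p_{i,k^*-1})\cup\{y\})=g^{-1}\bigl(\ACL(\ran(p_{i,k^*-1})\cup\{x_{k^*}\})\bigr)$, so your condition $s\in\ACL(\dom(p_{i,k^*-1})\cup\{y\})\setminus\dom(p_{i,k^*-1})$ translates to $g(s)=x_m$ for some $m$ in that batch. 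For each such $m$, the number of admissible $y$'s is $|\{g^{-1}(x_{k^*}):g\supset p_{i,k^*-1},\ g^{-1}(x_m)=s\}|$, which the paper shows (in the claim defining $k_m$) equals $|G_{(\ran(p_{i,k^*-1})\cup\{x_m\})}(x_{k^*})|$; this is finite precisely because $x_{k^*}\in\ACL(\ran(p_{i,k^*-1})\cup\{x_m\})$, the exchange-type consequence of the minimal enumeration in Claim~\ref{c:ordering is nice for ran}. Two things the paper's organization buys that yours hides: first, by working on the range side throughout, every constant in the bound is visibly deterministic given $p_{i-1}$ (whereas your $\dom(p_{i,k^*-1})$ and $T_{k^*}$ are random for later batches, and one must translate back to the range to see uniformity of $C$); second, the paper makes explicit that the finiteness of the relevant orbit hinges on the minimal ordering, a point your phrase ``orbit-stabiliser type arguments'' leaves implicit but which is exactly where Claim~\ref{c:ordering is nice for ran} does its real work --- bounded closure size alone would not suffice without this exchange step.
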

	\begin{proof}
		We suppose that $i$ is even and prove the lemma only in this case. The proof 
		for the case when $i$ is odd is analogous. 
		
		One can easily see using induction on $i$ that if $M_1, \dots, M_{i - 
			1}$ and $N_1, \dots, N_{i - 1}$ are given then the random process can yield 
		only finitely many different  $p_{i - 1}$ as a result. 
		
		Let $p_{i - 1}$ be one of the possible outcomes, and let $(x_1, x_2, \dots, 
		x_{j})$ denote the elements of $\ACL(\ran(p_{i - 1}) \cup S_i) \setminus 
		\ran(p_{i - 1})$ enumerated in the same order as they appear during the 
		construction. Note that this only depends on $p_{i - 1}$ and $M_i$. Let 
		$a_1$ be the index for which $\ACL(\ran(p_{i - 1}) \cup \{x_1\}) = \ran(p_{i - 1}) \cup \{x_1, \dots, x_{a_1}\}$, such an index exists using Claim 
		\ref{c:ordering is nice for ran}. Hence, for every $m \le a_1$, $x_m \in 
		\ACL(\ran(p_{i - 1}) \cup \{x_1\})$, thus using Claim \ref{c:ordering is nice 
			for ran} again, it follows that 
		\begin{equation}
		\label{e:x_1 in ACL(... x_m)}
		\text{$x_1 \in \ACL(\ran(p_{i - 1}) \cup \{x_m\})$ for every $1 \le m \le a_1$.}
		\end{equation} 
		\begin{claim}
			\label{c:k_m exists}
			For every such $m$, there is a unique positive integer $k_m$ such 
			that if $q$ is an extension of $p_{i - 1}$ with $\ran(q) = 
			\ran(p_{i - 1}) \cup \{x_m\}$ (such that $q \subset g$ for some $g \in G$) 
			then $|\{g^{-1}(x_1) : g \in G, g \supset q\}| = k_m$.
		\end{claim}
		\begin{proof}
			Let $H = G_{(\ran(p_{i - 1}) \cup x_m)}$, then 
			\begin{equation}
			\label{e:k def}
			k = |\{g(x_1) : g \in  H\}| = |\{g^{-1}(x_1) : g \in H\}|
			\end{equation}
			is finite using \eqref{e:x_1 in ACL(... x_m)} and the fact that $H$ is a 
			subgroup. It is enough to show that if $q$ is an extension of $p_{i - 1}$ 
			with $\ran(q) = \ran(p_{i - 1}) \cup \{x_m\}$ then $|\{g^{-1}(x_1) : g \in 
			G, g \supset q\}| = k$. 
			
			Let $g_1, \dots, g_k \in H$ with $g^{-1}_\ell(x_1) \ne g^{-1}_n(x_1)$ if 
			$\ell \ne n$. If $h \in G$ is a permutation with $h \supset q$ then $g_n h 
			\supset q$ for 
			every $1 \le n \le k$. Then using the identity $(g_n h)^{-1}(x_1) = h^{-1} 
			(g_n^{-1}(x_1))$, $(g_\ell h)^{-1}(x_1) \ne (g_n h)^{-1}(x_1)$ if $\ell 
			\ne n$. This shows that $|\{g^{-1}(x_1) : g \in G, g \supset q\}| \ge k$. 
			
			To prove the other inequality, suppose towards a contradiction that there 
			exist $g_1, \dots, g_{k + 1}$ with $g_n \supset q$ for every $n \le k + 1$ 
			and $g_\ell^{-1}(x_1) \neq g_n^{-1}(x_1)$ for every $\ell \neq n$. It is 
			easy to see that $g_n g_1^{-1} \in H$ for every $n$, but the values 
			$(g_ng_1^{-1})^{-1}(x_1) = g_1(g_n^{-1}(x_1))$ are pairwise distinct, 
			contradicting \eqref{e:k def}. Thus the proof of the claim is complete. 
		\end{proof}
		
		Now let $k = \max\{k_2, k_3, \dots, k_{a_1}\}$, if $a_1 \ge 2$, otherwise let 
		$k = 1$.
		
		\begin{claim}
			If $N_i > \frac{kKj}{\varepsilon}$ then for every fixed set $S \subset 
			\omega$ with $|S| = K$ we have $\mathbb{P}(p_i^{-1}(x_m) \in S) < 
			\frac{\varepsilon}{j}$ for every $1 \le m \le a_1$.
		\end{claim}
		\begin{proof}
			This is immediate for $m = 1$, since $k \ge 1$, and the preimage of 
			$x_1$ is chosen from $N_i$ many elements using \eqref{e:possibility (b) for 
				x_1}. Now let $m > 1$, using Claim \ref{c:k_m exists} and the fact that $k 
			\ge k_m$, it follows that for every $y \in \omega$, $|\{g^{-1}(x_1) : g \in 
			G, g \supset p_{i - 1}, g(y) = x_m\}| \le k$, hence for the set $R = \{g^{-1}(x_1) : g \in G, g 
			\supset p_{i - 1}, g^{-1}(x_m) \in S\}$, $|R| \le kK$. In order to be able to 
			extend $p_{i - 1}$ to $p_i$ with $p_i^{-1}(x_m) \in S$, 
			we need to choose $p_i^{-1}(x_1)$ from $R$. Since during the construction 
			of the random automorphism, $p_i^{-1}(x_1)$ is chosen uniformly from a set of size 
			$N_i > \frac{kKj}{\varepsilon}$, we conclude that $\mathbb{P}(p_i^{-1}(x_m) \in S) 
			\le \mathbb{P}(p_i^{-1}(x_1) \in R) \le \frac{|R|}{N_i} < \frac{\varepsilon}{j}$. 
		\end{proof}
		
		For the rest of the proof, we need to repeat the above argument until we 
		reach $j$. If $a_1 < j$, let $a_2$ be the index satisfying $\ACL(\ran(p_{i - 
			1}) \cup \{x_1, \dots, x_{a_1 + 1}\}) = \ran(p_{i - 1}) \cup \{x_1, \dots, 
		x_{a_2}\}$, such an index exists using Claim \ref{c:ordering is nice for ran} 
		as before. Again, we can set a lower bound for $N_i$ so that the for every 
		$a_1 < m \le a_2$, $\mathbb{P}(p_i^{-1}(x_m) \in S) < \frac{\varepsilon}{j}$. 
		Repeating the argument, we can choose $N_i$ so that 
		$\mathbb{P}(p_{i}^{-1}(x_m) \in S) < \frac{\varepsilon}{j}$ for every $1 \le m \le j$, 
		thus $\mathbb{P}(p_i^{-1}(\{x_1, \dots, x_j\}) \cap S \neq \emptyset) < 
		\varepsilon$. Completing the proof of the lemma. 
	\end{proof}
	
	Now we prove a proposition from which our main result will easily follow.
	\begin{proposition}
		\label{p:F and C co-Haar null}
		Let $G \leq S_\infty$ be a closed subgroup. If $G$ has the $FACP$ then the sets 
		\begin{equation*}
		\begin{split}
		\mathcal{F} = \{g \in G :& \text{$g$ has finitely many finite orbits}\}, \\
		\mathcal{C} = \{g \in G :& \forall F \subset \omega \text{ finite } \forall 
		x \in \omega \;(\text{if $G_{(F)}(x)$ is infinite} \\ 
		&\text{then it is not covered by finitely many orbits of $g$})\}  
		\end{split}
		\end{equation*}
		are co-Haar null.  
	\end{proposition}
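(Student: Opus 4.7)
The plan is to use the Borel probability measure $\mu$ on $G$ produced by the random process, with sequences $(M_i)$ and $(N_i)$ chosen carefully, and to show that $\mu$ witnesses both $G\setminus \mathcal{F}$ and $G\setminus \mathcal{C}$ to be Haar null. Since both complements are manifestly conjugacy-invariant Borel sets, by Lemma \ref{l:conjugacyinvariant} it suffices to prove left Haar nullness, i.e.\ for every fixed $h \in G$, $\mu$-almost every $p$ satisfies $h^{-1}p \in \mathcal{F}\cap \mathcal{C}$. One picks $M_i \to \infty$ to ensure every element of $\omega$ eventually enters $\dom(p_i)\cap \ran(p_i)$, and then chooses $N_i$ inductively by applying Lemma \ref{l:badelements} with $\varepsilon_i = 2^{-i}$ and with $K_i$ large enough to absorb the finite set of ``forbidden targets'' relevant at stage $i$ (depending on $h$, on $p_{i-1}$, and on the countably many pairs $(F,x)$ being tracked).

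For the $\mathcal{F}$-part, any finite orbit $(a_0, \ldots, a_{n-1})$ of $h^{-1}p$ forces the equations $p(a_k) = h(a_{k+1 \bmod n})$, so at the stage $i$ where the orbit first ``closes up'' one of the random choices of the process must land on a specific element determined by $h$ and by $p_{i-1}$. Lemma \ref{l:badelements} bounds the probability that a random choice at stage $i$ hits such a prescribed finite set by $\varepsilon_i = 2^{-i}$, so by Borel--Cantelli almost surely only finitely many new finite orbits of $h^{-1}p$ can be created across all stages, giving $h^{-1}p \in \mathcal{F}$.

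For the $\mathcal{C}$-part, there are only countably many pairs $(F, x)$ with $O := G_{(F)}(x)$ infinite, so by a countable union it suffices to fix one such $(F,x)$ and show that almost surely $O$ is not covered by finitely many $h^{-1}p$-orbits. By construction, $O$ appears as some $O_k$ infinitely many times in the enumeration, and at each corresponding stage $i$ the process injects new elements of $O$ into $\ran(p_i)$ (resp.\ $\dom(p_i)$) with preimages (images) chosen from $N_i$ fresh options. Applying Lemma \ref{l:badelements} with a finite forbidden set depending on $h$ and on the previously constructed orbits of $h^{-1}p$ that already intersect $O$, one gets probability at least $1 - \varepsilon_i$ that the new choice produces a point of $O$ lying in a genuinely new $h^{-1}p$-orbit. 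Borel--Cantelli across the infinitely many stages handling $O$ then yields infinitely many distinct $h^{-1}p$-orbits meeting $O$, so no finite collection of such orbits can cover $O$.

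The main obstacle I expect is the bookkeeping around $K_i$: at each stage the finite set of forbidden targets depends on $h$, on $p_{i-1}$, and on the list of pairs $(F, x)$ currently under consideration, and its size must be bounded in advance in order for the application of Lemma \ref{l:badelements} to yield a summable tail. A secondary subtlety is translating the statements ``$h^{-1}p$ has a new finite orbit'' and ``$O$ is covered by finitely many orbits of $h^{-1}p$'' into events directly controlled by the random choices producing $p$, which is where the composition with the fixed $h$ interacts with Lemma \ref{l:badelements}. Once this translation is made cleanly, the two Borel--Cantelli applications close both halves.
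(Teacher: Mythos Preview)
Your approach is exactly the one in the paper: fix the measure $\mu$ via the random process once and for all, and then show for each $h \in G$ that $\mu$-a.e.\ $p$ satisfies $ph \in \mathcal{F}\cap\mathcal{C}$ (the paper uses right translates $ph$ rather than your $h^{-1}p$, which is immaterial by conjugacy invariance). The Borel--Cantelli structure for both halves is also the same.

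However, the ``main obstacle'' you flag is the crux of the proof, and you have not said how to overcome it. As written, your $K_i$ is described as depending on $h$; but $N_i$ (and hence $\mu$) must be fixed \emph{before} one quantifies over $h$, so any circularity here is fatal. The resolution is concrete: for $i$ even, a new finite orbit of $ph$ can close, or two partial paths of $ph$ can connect, at stage $i$ only if some preimage chosen at stage $i$ lands in the set $h(\ran(p_{i-1}))$. The set itself depends on $h$, but its \emph{cardinality} does not: $|h(\ran(p_{i-1}))| = |\ran(p_{i-1})| \le m_i$, where $m_i$ is a bound determined by $M_1,\dots,M_i,N_1,\dots,N_{i-1}$ alone. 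One therefore takes $K_i = m_i$ in Lemma~\ref{l:badelements}, and the resulting $N_i$ is independent of $h$. The paper sets $M_i = 2^i$ and $\varepsilon_i = 2^{-i}$ and carries this through.

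For the $\mathcal{C}$-part, your description (``forbidden set depending on the previously constructed orbits of $h^{-1}p$ that already intersect $O$'') is not quite the right bookkeeping, since that set can grow without an a priori bound. The paper instead argues directly: at each stage handling $O$, the $M_i$ fresh elements of $O$ injected into $\ran(p_i)$ lie in pairwise distinct partial paths of $ph$ unless a preimage lands in $h(\ran(p_i))$, and the same $m_i$-bound controls this. Since only finitely many later ``bad events'' can merge partial paths (already established), and $M_i \to \infty$, infinitely many distinct orbits of $ph$ meet $O$.
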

	
	The set $\mathcal{C}$ could seem unnatural for the first sight. However, from the above fact about the set $\mathcal{C}$ not only our main theorem will be deduced, but this fact also plays a crucial role in proving Theorem \ref{t:randomintro} (see \cite{autrcikk}).

	\begin{proof} 
		We first show the following lemma. 
		\begin{lemma}
			\label{l:conin}
			The sets $\mathcal{F}$ and $\mathcal{C}$ are conjugacy invariant Borel sets.
		\end{lemma}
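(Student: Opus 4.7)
The plan is to handle conjugacy invariance and Borel measurability separately, and within each to treat $\mathcal{F}$ and $\mathcal{C}$ in parallel. The structural fact I will use repeatedly is that conjugation by $h \in G$ carries $g$-orbits to $hgh^{-1}$-orbits bijectively via $\mathcal{O}^{hgh^{-1}}(h(a)) = h(\mathcal{O}^g(a))$, and carries pointwise stabilisers via $h G_{(F)} h^{-1} = G_{(hF)}$, whence $h \cdot G_{(F)}(x) = G_{(hF)}(h(x))$; in particular infinitude of one side is equivalent to infinitude of the other.

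For conjugacy invariance of $\mathcal{F}$, the orbits of $g$ and $hgh^{-1}$ are bijectively matched with equal cardinalities, so the number of finite orbits is preserved. For conjugacy invariance of $\mathcal{C}$, I would suppose $g \in \mathcal{C}$ and that some infinite $G_{(F)}(x)$ were covered by finitely many $hgh^{-1}$-orbits; applying $h^{-1}$ would then express the infinite set $G_{(h^{-1}F)}(h^{-1}(x))$ as a union of finitely many $g$-orbits, contradicting $g \in \mathcal{C}$.

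For Borel measurability of $\mathcal{F}$, I would observe that $g \in \mathcal{F}$ if and only if some $N$ satisfies: every $x > N$ has infinite $g$-orbit. Each set $\{g : g^k(x) \neq x\}$ is clopen in $S_\infty$ for $k \ge 1$ and $x \in \omega$, so
\[
\mathcal{F} = \bigcup_{N \in \omega} \bigcap_{x > N} \bigcap_{k \ge 1} \{g \in G : g^k(x) \ne x\}
\]
is plainly Borel. For Borel measurability of $\mathcal{C}$, I would unpack the definition to get that $g \notin \mathcal{C}$ iff there exist a finite $F \subset \omega$, an $x \in \omega$, an $n \in \omega$, and $a_1, \dots, a_n \in \omega$ such that $G_{(F)}(x)$ is infinite (a condition not depending on $g$) and $G_{(F)}(x) \subseteq \bigcup_{i=1}^n \mathcal{O}^g(a_i)$. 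For each fixed parameter tuple the latter condition defines the $G_\delta$ set
\[
\bigcap_{y \in G_{(F)}(x)} \bigcup_{i=1}^{n} \bigcup_{k \in \mathbb{Z}} \{g \in G : g^k(a_i) = y\},
\]
so taking the countable union over all parameter choices shows that $\mathcal{C}^c$ is Borel.

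The only step that demands any care is the conjugacy invariance of $\mathcal{C}$, where one must move the stabiliser parameter $F$ and the base point $x$ simultaneously under $h^{-1}$; every other step is routine manipulation of clopen basic sets in $S_\infty$.
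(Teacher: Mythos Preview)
Your proof is correct and follows essentially the same approach as the paper. The conjugacy-invariance arguments are identical in spirit (the paper phrases the $\mathcal{C}$ case directly rather than contrapositively, but uses the same identity $hG_{(F)}(x)=G_{(hF)}(h(x))$), and the Borel computations differ only in bookkeeping: the paper shows $S_\infty\setminus\mathcal{F}$ is $G_\delta$ by counting finite orbits, while you show $\mathcal{F}$ is $F_\sigma$ via a threshold $N$; for $\mathcal{C}$ the paper parameterises the covering orbits by $i<n$ rather than by explicit representatives $a_1,\dots,a_n$, but both yield $\mathcal{C}^c\in G_{\delta\sigma}$.
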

		\begin{proof}
			The fact that $\mathcal{F}$ is conjugacy invariant follows form the fact 
			that conjugation does not change the orbit structure of a permutation. 
			
			To show that $\mathcal{C}$ is conjugacy invariant, let $c \in 
			\mathcal{C}$, $h \in G$, we need to show that $h^{-1}ch \in \mathcal{C}$. Let $F \subset \omega$ be finite and $x \in \omega$ so that $|G_{(F)}(x)|=\aleph_0$. Note that $G_{(h(F))}(h(x))=hG_{(F)}h^{-1}(h(x))=hG_{(F)}(x)$, hence the first set is also infinite. By $c \in C$ there exists an infinite set $\{x_n: n \in \omega\} \subset G_{(h(F))}(h(x))$ so that for $n \not = n'$ the points $x_n$ and $x_{n'}$ are in different $c$ orbits. But then the points $\{h^{-1}(x_n): n \in \omega\}\subset G_{(F)}(x)$ are in pairwise distinct $h^{-1}ch$ orbits, as desired.

			To show that $\mathcal{F}$ is Borel, notice that the set of permutations 
			containing a given finite orbit is open for every finite orbit. Thus for any 
			finite set of finite orbits the set of permutations containing those finite 
			orbits in their orbit decompositions is open: it can be obtained as the 
			intersection of finitely many open sets. Thus for every $n \in \omega$ the 
			set of permutations containing at least $n$ finite orbits is open: it can 
			be obtained as the union of open sets (one open set for each possible set 
			of $n$ orbits). Thus $S_\infty \setminus \mathcal{F}$ is $G_\delta$: it is 
			the intersection of the above open sets. Hence $\mathcal{F}$ is Borel. 
			
			Now we show that $\mathcal{C}$ is  also Borel. It is enough to show that if 
			$H \subset \omega$ is arbitrary then the set 
			$H^* = \{g \in G : \text{finitely many orbits of $g$ cannot cover $H$}\}$ 
			is Borel, since $\mathcal{C}$ can be written as the countable intersection of such sets. 
			And $H^*$ can be easily seen to be Borel for any $H$, since its complement, 
			$\{g \in G : \exists n \ \forall m \in H \ \exists k \ \exists i < n \ (g^k(i) = m)\}$ 
			is $G_{\delta\sigma}$, hence $H^*$ is $F_{\sigma\delta}$. 
		\end{proof}
		
		To prove the proposition, we use the above construction to generate a random 
		permutation $p$. We set $M_i = 2^i$ for every $i \ge 1$ and we define 
		$(N_i)_{i \ge 1}$ recursively. If $N_1, \dots, N_{i - 1}$ are already 
		defined, then, as before, the random process can yield only finitely many 
		distinct $p_{i - 1}$. Hence, there is a bound $m_i$ depending only on $N_1, 
		\dots, N_{i - 1}$ such that $|\dom(p_i)| = |\ran(p_i)| \le m_i$, since 
		$|\ran(p_i)| = |\ACL(\ran(p_{i - 1}) \cup S_i)|$ if $i$ is even and 
		$|\dom(p_i)| = |\ACL(\dom(p_{i - 1}) \cup S_i)|$ if $i$ is odd, which is 
		independent of $N_i$. Now we use Lemma \ref{l:badelements} to choose $N_i$ so 
		that the conclusion of the lemma is true with $K = m_i$ and $\varepsilon = 
		\frac{1}{2^i}$. 
		
		Using Lemma \ref{l:conjugacyinvariant} and the fact that the sets 
		$\mathcal{F}$ and $\mathcal{C}$ are conjugacy invariant, it is enough to show that 
		\begin{equation}
		\label{e:ph-ban veges sok veges}
		\mathbb{P}(\text{$ph$ has finitely many finite orbits} ) = 1
		\end{equation}
		and
		\begin{equation}
		\label{e:ph-ban veges sok ciklus nem fedi O-t}
		\mathbb{P}(\text{finitely many orbits of $ph$ do not cover $O$} ) = 1
		\end{equation}
		for every $h \in G$, every finite $F \subset \omega$ and every infinite orbit $O$ 
		of $G_{(F)}$, since there exist only 
		countably many such orbits. So let us fix $h \in G$ and an infinite orbit $O 
		\subset \omega$ of $G_{(F)}$ for some finite $F \subset \omega$ for the rest 
		of the proof.
		
		For a partial permutation $q$, a \emph{partial path} in $q$, is a sequence 
		$(y, q(y), \dots, q^n(y))$ with $n \ge 1$, $q^n(y) \notin \dom(q)$ and $y 
		\notin \ran(q)$. Note that $p_ih$ is considered a partial permutation with 
		$\dom(p_ih) = h^{-1}(\dom(p_i))$ and $\ran(p_ih) = \ran(p_i)$. 
		
		During the construction of the random permutation, an \emph{event} occurs 
		when the partial permutation is extended to a new element at some stage 
		regardless of whether it happens for possibility (a) or (b). Suppose that 
		during an event, the partial permutation $p'$ is extended to $p'' = p' \cup 
		(x, y)$. We call this event \emph{bad} if the number of partial paths 
		decreases or $h^{-1}(x) = y$. 
		Note that an event is bad if the extension connects two partial paths of 
		$p'h$ or it completes an orbit (possibly a fixed point). 
		\begin{claim}
			\label{c:dm:1}
			Almost surely, only finitely many bad events happen. 
		\end{claim}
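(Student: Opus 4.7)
The plan is to apply the Borel--Cantelli lemma to the events $E_i = \{\text{at least one bad event occurs during stage } i\}$. Since each stage contains only finitely many individual events, it suffices to show that $\sum_i \mathbb{P}(E_i) < \infty$, which will yield the existence, almost surely, of some finite last stage with a bad event.

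The first observation I would record is that, conditional on the outcome $p_{i-1}$ of the previous stages, the set $\ran(p_i)$ (for $i$ even) is actually deterministic: it equals $\ACL(\ran(p_{i-1})\cup S_i)$, and $S_i$ itself is determined by $\ran(p_{i-1})$. The only randomness at stage $i$ lies in the choice of the preimages $z_k := p_i^{-1}(x_k)$. Symmetrically, for $i$ odd, $\dom(p_i)$ is deterministic given $p_{i-1}$, and only the images $y_k := p_i(x_k)$ are random. Moreover, $|\ran(p_i)|, |\dom(p_i)| \le m_i$ by the definition of $m_i$ from the proof of Proposition \ref{p:F and C co-Haar null}.

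Next I would perform a case analysis pinning down when an event is bad in terms of the random choice. Fix $i$ even and substep $k$: extending $p_{i-1,k-1}$ by $(z_k,x_k)$ adds exactly one edge in $p_{i,k}h$, namely from $h^{-1}(z_k)$ to $x_k$ (since $\dom(p_{i,k}h)=h^{-1}(\dom(p_{i,k}))$ and $\ran(p_{i,k}h)=\ran(p_{i,k})$). The two clauses of ``bad'' unpack as: (1) the partial path count drops iff $h^{-1}(z_k)\in\ran(p_{i-1,k-1})$ and $x_k\in h^{-1}(\dom(p_{i-1,k-1}))$; (2) a self-loop is created iff $h^{-1}(z_k)=x_k$, i.e.\ $z_k=h(x_k)$. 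In either case
\[
z_k \in h(\ran(p_{i-1,k-1}))\cup\{h(x_k)\} \subseteq h(\ran(p_i)),
\]
a deterministic set of size at most $m_i$. The analogous statement for $i$ odd is that a bad event at substep $k$ forces $y_k\in h^{-1}(\dom(p_i))$, a deterministic set of size at most $m_i$.

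Finally I would invoke Lemma \ref{l:badelements} conditionally on $p_{i-1}$: with $S=h(\ran(p_i))$ (or $S=h^{-1}(\dom(p_i))$), $K=m_i$ and $\varepsilon=2^{-i}$, the choice of $N_i$ made in the proof of Proposition \ref{p:F and C co-Haar null} gives $\mathbb{P}(E_i\mid p_{i-1})\le 2^{-i}$. Taking expectations, $\mathbb{P}(E_i)\le 2^{-i}$, summability holds, and Borel--Cantelli completes the proof. The main obstacle I anticipate is the case analysis of the middle paragraph: one has to verify that both bad scenarios (path-count decrease and self-loop) are simultaneously captured by a single small deterministic catch-set $h(\ran(p_i))$, correctly translating the notion of a partial path in $p_{i-1,k-1}h$ back into membership conditions on $\ran(p_{i-1,k-1})$ and $h^{-1}(\dom(p_{i-1,k-1}))$. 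Once this is nailed down, the application of Lemma \ref{l:badelements} and Borel--Cantelli is routine.
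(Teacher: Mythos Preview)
Your argument is correct and follows the paper's approach exactly: bound the probability of a bad event at stage $i$ by $2^{-i}$ via Lemma \ref{l:badelements}, then apply Borel--Cantelli. Two minor remarks: your $p_{i-1,k-1}$ should read $p_{i,k-1}$ throughout, and your catch-set $h(\ran(p_i))$ is in fact slightly more precise than the paper's stated $h(\ran(p_{i-1}))$ (which overlooks the self-loop target $h(x_k)$), though both are bounded by $K=m_i$.
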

		\begin{proof}
			Let $i$ be fixed and suppose first that it is even. It is easy to see that 
			a bad event can only happen at stage $i$ if a preimage is chosen from 
			$h(\ran(p_{i - 1}))$, that includes the case when a fixed point is 
			constructed. Note that $|\ran(p_i)| \le m_i$, thus the probability of 
			choosing a preimage from this set is at most $\frac{1}{2^i}$, using Lemma 
			\ref{l:badelements}. 
			
			We proceed similarly if $i$ is odd. Then to connect partial paths or 
			complete orbits, an image has to be chosen from the set 
			$h^{-1}(\dom(p_{i - 1}))$. Since  $|\dom(p_i)| \le m_i$, the probability of 
			choosing from this set is at most $\frac{1}{2^i}$. 
			
			Using the Borel--Cantelli lemma, the number of $i$ such that 
			a bad event happens at stage $i$ is finite almost surely. The fact that only a 
			finite number of bad events can happen at a particular stage 
			completes the proof of the claim. 
		\end{proof}
		
		Since a finite orbit can only be created during a bad event, \eqref{e:ph-ban 
			veges sok veges} follows immediately from the claim. Thus $\mathcal{F}$ is 
		co-Haar null.
		
		Now we prove that $\mathcal{C}$ is also co-Haar null by showing 
		\eqref{e:ph-ban veges sok ciklus nem fedi O-t}. Let $n_0, n_1, \ldots \in 
		\omega$ be a sequence with $n_0 < n_1 < \dots $ and $O_{n_i} = O$ for every 
		$i \in \omega$. Let $c_i$ be the number of partial paths of $p_{4n_i + 2}h$ 
		intersecting $O$. It is enough to show that the sequence $(c_i)_{i \in 
			\omega}$ is unbounded almost surely, since using Claim \ref{c:dm:1}, only 
		finitely many of such partial paths can be connected in later stages, hence 
		infinitely many orbits of $ph$ will intersect $O$, almost surely. 
		
		At stage $4n_i + 2$, $p_{4n_i + 1}$ is extended to $p_{4n_i + 2}$ with 
		$\ran(p_{4n_i + 2}) \setminus \ran(p_{4n_i + 1}) \supset S_{4n_i + 2}$, 
		$|S_{4n_i + 2}| = M_{4n_i + 2} = 2^{4n_i + 2}$ and 
		$S_{4n_i + 2} \subset O_{(4n_i + 2-2) / 4} = O_{n_i} = O$. 
		Hence, it is enough to prove that apart from a finite number of exceptions, 
		the elements of $\ran(p_{4n_i + 2}) \setminus \ran(p_{4n_i + 1})$ are in 
		different partial paths in $p_{4n_i + 2}h$, almost surely.
		
		The proof of this fact is similar to the proof of Claim \ref{c:dm:1}. 
		An element $y \in O\cap (\ran(p_{4n_i + 2}) \setminus \ran(p_{4n_i + 1}))$ 
		can only be contained in a completed orbit (of $p_{4n_i + 2}h$), if 
		$h^{-1}p_{4n_i + 2}^{-1}(y) \in \ran(p_{4n_i + 2})$, hence $p_{4n_i + 
			2}^{-1}(y) \in h(\ran(p_{4n_i + 2}))$. Similarly, if $y, y' \in O \cap 
		(\ran(p_{4n_i + 2}) \setminus \ran(p_{4n_i + 1}))$ are in 
		the same partial path (in $p_{4n_i + 2}h$) such that $y$ is the not the first 
		element of this path, then $p_{4n_i + 2}^{-1}(y) \in h(\ran(p_{4n_i + 2}))$. 
		Again using Lemma \ref{l:badelements}, the probability of this happening at 
		stage $4n_i + 2$ is at most $\frac{1}{2^{4n_i + 2}}$, since $|\ran(p_{4n_i + 
			2})| \le m_{4n_i + 2}$. As before, the application of the Borel--Cantelli 
		lemma completes the proof of \eqref{e:ph-ban veges sok ciklus nem fedi O-t}. 
		And thus the proof of the proposition is also complete. 
	\end{proof}
	
	\begin{theorem}\label{t:DM}
		Let $G \leq S_\infty$ be a closed subgroup. If $G$ has the $FACP$ then the sets 
		\begin{equation*}
		\begin{split}
		\mathcal{F} = \{g \in G : \text{$g$ has finitely many finite orbits}\}, \\
		\mathcal{I} = \{g \in G : \text{$g$ has infinitely many infinite orbits}\}  
		\end{split}
		\end{equation*}
		are both co-Haar null. Moreover, if $\mathcal{F}$ is co-Haar null then $G$ 
		has the $FACP$. 
	\end{theorem}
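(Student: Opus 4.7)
The forward direction splits into the two co-Haar nullness assertions. The statement about $\mathcal{F}$ is already contained in Proposition \ref{p:F and C co-Haar null}, so nothing further is required there. For $\mathcal{I}$, my plan is to establish the set-theoretic inclusion $\mathcal{F} \cap \mathcal{C} \subseteq \mathcal{I}$, from which $\mathcal{I}$ inherits co-Haar nullness as a superset of an intersection of two co-Haar null sets. Since $G$ has the $FACP$, the set $\ACL(\emptyset)$ is finite, so I can fix some $x \in \omega \setminus \ACL(\emptyset)$; then $G(x) = G_{(\emptyset)}(x)$ is infinite. For $g \in \mathcal{C}$ the orbit $G(x)$ cannot be covered by finitely many $g$-orbits, so it meets infinitely many distinct $g$-orbits; if in addition $g \in \mathcal{F}$, only finitely many of these $g$-orbits can be finite, leaving infinitely many infinite $g$-orbits, i.e.\ $g \in \mathcal{I}$.

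For the converse I would argue by contrapositive. Assume $G$ fails the $FACP$ and fix a finite $S \subset \omega$ with $\ACL(S)$ infinite. By Lemma \ref{l:ACLtrans} every $g \in G_{(S)}$ satisfies $g\ACL(S) = \ACL(gS) = \ACL(S)$, so $g$ preserves $\ACL(S)$; moreover each $g$-orbit inside $\ACL(S)$ is contained in some finite set $G_{(S)}(x)$, hence is finite. Since $\ACL(S)$ is infinite, $g$ partitions it into infinitely many finite orbits, so $g \notin \mathcal{F}$. Thus $G_{(S)} \subseteq G \setminus \mathcal{F}$. Now $G_{(S)}$ is a non-empty clopen subset of $G$, and non-empty open subsets of a Polish group are never Haar null: by the Lindel\"of property $G$ is a countable union of left translates of $G_{(S)}$, and since Haar nullness is preserved under translation and countable union, a Haar null open subgroup would force $G$ itself to be Haar null. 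Hence $G \setminus \mathcal{F}$ contains a non-Haar null set, and $\mathcal{F}$ is not co-Haar null.

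I expect no serious obstacle, since the probabilistic core of the argument has been packaged into Proposition \ref{p:F and C co-Haar null}; everything that remains is combinatorial and topological. The only real subtlety is that neither $\mathcal{F}$ nor $\mathcal{C}$ alone suffices to conclude membership in $\mathcal{I}$ -- their conjunction is essential -- and the $FACP$ is invoked precisely at the step of producing an infinite $G$-orbit on which to test the covering condition defining $\mathcal{C}$.
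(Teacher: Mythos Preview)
Your proposal is correct and follows essentially the same route as the paper's proof: both deduce that $\mathcal{I}$ is co-Haar null from the inclusion $\mathcal{F}\cap\mathcal{C}\subseteq\mathcal{I}$ (the paper phrases this as ``$g\in\mathcal{C}$ implies $g$ has infinitely many orbits, since otherwise finitely many orbits cover $\omega$ and hence any infinite $G_{(F)}$-orbit''), and both prove the converse by observing that the open stabiliser $G_{(S)}$ lies in $G\setminus\mathcal{F}$ when $\ACL(S)$ is infinite. Your version is a bit more explicit --- you produce the witness $x\notin\ACL(\emptyset)$ directly, invoke Lemma~\ref{l:ACLtrans} to see that $g\in G_{(S)}$ preserves $\ACL(S)$, and spell out why non-empty open sets are not Haar null --- but the underlying argument is the same.
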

	
	\begin{proof}

		The fact that $\mathcal{F}$ is co-Haar null follows immediately from 
		Proposition \ref{p:F and C co-Haar null}. Let $\mathcal{C}$ denote the set as 
		in Proposition \ref{p:F and C co-Haar null}. If $g \in \mathcal{C}$ 
		then $g$ contains infinitely many orbits, since otherwise finitely many 
		orbits of $g$ could cover $\omega$, hence every infinite orbit of $G_{(F)}$ 
		for some finite $F \subset \omega$. It follows that the co-Haar null set 
		$\mathcal{C} \cap \mathcal{F}$ is contained in $\mathcal{I}$, hence 
		$\mathcal{I}$ is also co-Haar null. And thus the proof of the first part of 
		the theorem is complete. 
		
		Now we prove the second assertion. We have to show that if $G$ does not have 
		the $FACP$ then $\mathcal{F}$ is not co-Haar null. If $G$ does not have the 
		$FACP$ then there is a finite set $S \subset \omega$ such that $\ACL(S)$ is 
		infinite. This means that all of the permutations in $G_{(S)}$ have 
		infinitely many finite orbits, hence $G_{(S)} \cap \mathcal{F} = \emptyset$. 
		The stabiliser $G_{(S)}$ is a non-empty open set, thus it cannot be Haar 
		null. Therefore the proof of the theorem is complete. 
	\end{proof}
	
	Now we are ready to prove the main result of this section.
	
	\begin{theorem}
		\label{t:gen}
		Let $\mathcal{A}$ be a locally finite Fra\"iss\'e limit. Then the following are equivalent:
		\begin{enumerate}
			\item almost every element of $\aut(\mc{A})$ has finitely many finite orbits,
			\item $\aut(\mc{A})$ has the FACP,
			\item $\mc{A}$ has the CSAP.
		\end{enumerate}
		Moreover, any of the above conditions implies that almost every element of $\mc{A}$ has infinitely many infinite orbits.
	\end{theorem}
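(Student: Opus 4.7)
My plan is to leverage Theorem \ref{t:DM}, which already gives us the equivalence \eqref{tc:finfin} $\Leftrightarrow$ \eqref{tc:FACP} and the ``moreover'' clause about infinitely many infinite orbits. Thus, the only thing left to prove is \eqref{tc:FACP} $\Leftrightarrow$ \eqref{tc:CSAP}, which is a purely model-theoretic fact translating the group-theoretic algebraic closure $\ACL$ into the amalgamation language on the age. The local finiteness assumption will be used to ensure that the substructures we build remain finite (hence belong to $age(\mc{A})$).

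For \eqref{tc:FACP} $\Rightarrow$ \eqref{tc:CSAP}, I would proceed as follows. Given $\mc{B}_0 \in age(\mc{A})$, I would embed $\mc{B}_0$ into $\mc{A}$ and let $\mc{B}$ be the substructure of $\mc{A}$ whose underlying set is $\ACL(\dom(\mc{B}_0))$ (noting that $\dom(\mc{B}_0) \subseteq \ACL(\dom(\mc{B}_0))$ and that every element of $\langle\ACL(\dom(\mc{B}_0))\rangle$ has a singleton orbit under the pointwise stabiliser, hence lies in $\ACL(\dom(\mc{B}_0))$ already). By FACP together with local finiteness, $\mc{B}$ is finite, and by Lemma \ref{l:idempotentness}, $\ACL(\dom(\mc{B})) = \dom(\mc{B})$. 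To then verify SAP over $\mc{B}$: given embeddings $\psi : \mc{B} \to \mc{C}$ and $\chi : \mc{B} \to \mc{D}$, embed $\mc{C}$ and $\mc{D}$ into $\mc{A}$ via extensions of the inclusion $\mc{B} \hookrightarrow \mc{A}$ (possible by ultrahomogeneity and the fact that $\mc{C}, \mc{D} \in age(\mc{A})$). I would then enumerate $\dom(\mc{D}) \setminus \dom(\mc{B})$ and iteratively apply a finite partial isomorphism of $\mc{A}$ fixing $\mc{B}$ pointwise that sends each new element to a point outside the (finite) forbidden set consisting of the copy of $\mc{C}$ minus $\mc{B}$ and the already-placed images; at each step such a point exists precisely because every element outside $\mc{B}$ has infinite orbit over $\mc{B}$. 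Ultrahomogeneity then produces an automorphism extending the partial isomorphism, yielding a copy of $\mc{D}$ meeting the copy of $\mc{C}$ exactly in $\mc{B}$, and the finite substructure generated by their union is the desired strong amalgam $\mc{E}$.

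For \eqref{tc:CSAP} $\Rightarrow$ \eqref{tc:FACP}, fix a finite $S \subset \dom(\mc{A})$. Since $G_{(S)} = G_{(\langle S \rangle)}$ and $\langle S \rangle$ is finite by local finiteness, we may assume $S = \dom(\langle S \rangle)$. Apply CSAP to obtain $\mc{B} \in age(\mc{A})$ with $\langle S \rangle \hookrightarrow \mc{B}$ such that SAP holds over $\mc{B}$; by ultrahomogeneity, realize $\mc{B}$ as a finite substructure of $\mc{A}$ containing $S$. It suffices to show $\ACL(\dom(\mc{B})) = \dom(\mc{B})$, since then $\ACL(S) \subseteq \ACL(\dom(\mc{B})) = \dom(\mc{B})$ is finite. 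Given $a \in \dom(\mc{A}) \setminus \dom(\mc{B})$, I would, for each $n \in \omega$, strongly amalgamate $n$ copies of $\langle \dom(\mc{B}) \cup \{a\}\rangle$ over $\mc{B}$ (iteratively, using SAP over $\mc{B}$); this produces a finite structure containing $n$ distinct realizations of the same ``type'' of $a$ over $\mc{B}$. Embedding this structure into $\mc{A}$ extending the inclusion of $\mc{B}$ (via Fraïssé/ultrahomogeneity), we obtain $n$ distinct elements of $\mc{A}$ in the $G_{(\mc{B})}$-orbit of $a$. Letting $n \to \infty$ shows this orbit is infinite.

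The main obstacle will be the back-and-forth argument in \eqref{tc:FACP} $\Rightarrow$ \eqref{tc:CSAP}: one must carefully verify that the iterative placement of the elements of $\mc{D} \setminus \mc{B}$ into positions disjoint from $\mc{C}$ can always be carried out, which requires that infinite orbits over $\mc{B}$ remain nonempty after removing the finitely many previously placed images — this is automatic because $\mc{B}$ is $\ACL$-closed and we only forbid a finite set at each stage. A smaller but nontrivial point is the somewhat subtle verification in \eqref{tc:CSAP} $\Rightarrow$ \eqref{tc:FACP} that strong amalgamation truly produces $n$ \emph{pairwise distinct} copies of $a$ over $\mc{B}$ (for this one iterates pairwise strong amalgamation rather than amalgamating all at once), but this is a standard consequence of the defining disjointness clause of SAP.
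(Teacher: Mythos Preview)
Your reductions \eqref{tc:finfin} $\Leftrightarrow$ \eqref{tc:FACP} via Theorem \ref{t:DM}, the ``moreover'' clause, and the direction \eqref{tc:CSAP} $\Rightarrow$ \eqref{tc:FACP} all match the paper's argument essentially verbatim.

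The gap is precisely at the step you yourself flag as the main obstacle in \eqref{tc:FACP} $\Rightarrow$ \eqref{tc:CSAP}. Once you have placed $d_1, \dots, d_{i-1}$ at $d'_1, \dots, d'_{i-1}$, the set of legitimate targets for $d_i$ is \emph{not} the full orbit $G_{(\dom(\mc{B}))}(d_i)$: it is $\{g(d_i) : g \in G,\ g \supset p_{i-1}\}$, which is a single orbit of the stabiliser $G_{(\dom(\mc{B}) \cup \{d'_1, \dots, d'_{i-1}\})}$. The set $\dom(\mc{B}) \cup \{d'_1, \dots, d'_{i-1}\}$ is in general not $\ACL$-closed, so this orbit can be finite, and nothing you have said prevents it from lying entirely inside the forbidden set $\dom(\mc{C}) \setminus \dom(\mc{B})$. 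Your sentence ``this is automatic because $\mc{B}$ is $\ACL$-closed and we only forbid a finite set at each stage'' conflates the orbit over $\dom(\mc{B})$ with the orbit over this larger set; the former is infinite, the latter need not be.

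The paper handles this by moving the entire set at once rather than element by element. Since every point of $\dom(\mc{A}) \setminus \dom(\mc{B})$ has infinite orbit under $\aut(\mc{A})_{(\dom(\mc{B}))}$, Neumann's lemma \cite[Corollary 4.2.2]{hodges} produces a \emph{single} $f \in \aut(\mc{A})_{(\dom(\mc{B}))}$ with $f(\dom(\mc{C}) \setminus \dom(\mc{B}))$ disjoint from $\dom(\mc{D}) \setminus \dom(\mc{B})$; then $\mc{E}$ is the substructure generated by $f(\dom(\mc{C})) \cup \dom(\mc{D})$. Your greedy element-by-element strategy is in effect an attempted inductive proof of Neumann's lemma, and the inductive step is exactly where that lemma has nontrivial content (a group covered by finitely many cosets of infinite-index subgroups is a genuine obstruction to the naive induction). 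Either invoke Neumann's lemma directly, or replace the greedy argument by the corresponding coset-covering argument.
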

	
	\begin{proof}
		
		The equivalence \eqref{tc:finfin} $\iff$ \eqref{tc:FACP}, and the last statement of the theorem is just the application of Theorem \ref{t:DM} to $G=\aut(\mc{A})$. Thus, it is enough to show that \eqref{tc:FACP} $\iff$ \eqref{tc:CSAP}. 
		
		Let $\mc{K}=age(\mc{A})$. Since $\mc{A}$ is the limit of $\mc{K}$, using that $\mc{A}$ is ultrahomogeneous it follows that $\mc{K}$ has the extension property, that is, for every $\mc{B},\mc{C} \in \mc{K}$ and embeddings $\phi:\mc{B} \to \mc{C}$ and $\psi:\mc{B} \to \mc{A}$ there exists an embedding $\psi':\mc{C} \to \mc{A}$ with $\psi' \circ \phi=\psi$. Thus, the embeddings between the structures in $\mc{K}$ can be considered as partial automorphisms of $\mc{A}$. 
		
		(\eqref{tc:FACP} $\Rightarrow$ \eqref{tc:CSAP}) Take an arbitrary $\mc{B}_0 \in \mc{K}$ and fix an isomorphic copy of it inside $\mc{A}$. Let $\mc{B}=\ACL(\dom(\mc{B}_0))$ and note that by the fact that $\aut(\mc{A})$ has the $FACP$  $\mc{B}$ is a finite substructure of $\mc{A}$. We will show that over $\mc{B}$ the strong amalgamation property holds (see Definition \ref{CSAPdef}). In order to see this, let $\mc{C}, \mc{D} \in \mc{K}$ and let $\psi: \mc{B} \to \mc{C}$ and $\phi:\mc{B} \to \mc{D}$ be embeddings. By the extension property we can suppose that $\mc{B}<\mc{C} < \mc{A}$, $\mc{B} < \mc{D}<\mc{A}$ and $\psi=\phi=id_{\mc{B}}$. By Lemma \ref{l:idempotentness} $\ACL(\dom(\mc{B}))=\mc{B}$, hence the $\aut(\mc{A})_{(\dom(\mc{B}))}$ orbit of every point in $\dom(\mc{C})\setminus \dom(\mc{B})$ is infinite. By M. Neumann's Lemma \cite[Corollary 4.2.2.]{hodges} $\dom(\mc{C})\setminus \dom(\mc{B})$ has infinitely many pairwise disjoint copies under the action of $\aut(\mc{A})_{(\dom(\mc{B}))}$. In particular, by the pigeonhole principle, there exists an $f \in \aut(\mc{A})_{(\dom(\mc{B}))}$ such that $f(\mc{C}) \cap \mc{D}=\mc{B}$. Letting $\mc{E}$ to be the substructure of $\mc{A}$ generated by $\dom(f(\mc{C}))\cup \dom(\mc{D})$, $\psi'=f|_{\mc{C}}$ and $\phi'=\id_{\mc{D}}$ shows that SAP holds over $\mc{B}$ and hence CSAP holds as well. 
		
		(\eqref{tc:FACP} $\Leftarrow$ \eqref{tc:CSAP}) Let $S \subset \dom(\mc{A})$ be finite. Let $\mc{B}_0$ be the substructure generated by $S$. Clearly, $\mc{B}_0 \in \mc{K}$, hence there exists a $\mc{B} \in \mc{K}$ over which the strong amalgamation property holds and which contains an isomorphic copy of $\mc{B}_0$. By the extension property of $\mc{A}$ we can suppose that $\mc{B}$ and all the structures constructed later on in this part of the proof are substructures of $\mc{A}$ containing $\mc{B}_0$. 
		
		We claim that for every $b \in \dom(\mc{A}) \setminus \dom(\mc{B})$ the orbit $\aut(\mc{A})_{(\dom(\mc{B}))}(b)$ is infinite. Indeed, let $\mc{C}$ be the substructure generated by $\dom(\mc{B}) \cup \{b\}$. Using the strong amalgamation property repeatedly, first for $\mc{B},\mc{C}$ and $\mc{D}=\mc{C}$ obtaining an $\mc{E}_1$, then for  $\mc{B},\mc{C}$ and $\mc{D}=\mc{E}_1$ obtaining an $\mc{E}_2$ etc. for every $n$ we can find a substructure $\mc{E}_n$ of $\mc{A}$ which contains $n+1$ isomorphic copies of $\mc{C}$ which intersect only in $\mc{B}$, and the isomorphisms between these copies fix $\mc{B}$. Extending the isomorphisms to automorphisms of $\aut(\mc{A})$ shows that the orbit $\aut(\mc{A})_{(\dom(\mc{B}))}(b)$ is infinite.
	\end{proof}
	
	\begin{remark}
		It is not hard to construct countable Fra\"iss\'e classes to show that CSAP is neither equivalent to SAP, nor to AP. An example showing that CSAP $\not \Rightarrow $ SAP is $age(\mathcal{B}_\infty)$. Indeed, using a result of Schmerl \cite{SAP-NoAlg} that states that a Fra\"iss\'e class has the SAP if and only if its automorphism group has no algebraicity (that is, $\ACL(F) = F$ for every finite $F$), $age(\mathcal{B}_\infty)$ cannot have the SAP. 
		
		To see that AP $\not \Rightarrow $ CSAP, let $\mathcal{Z}$ be the structure on the set $\mathbb{Z}$ of integers with a relations $R_n$ for each $n \ge 1$, $n \in \N$ satisfying that $a R_n b \Leftrightarrow |a - b| = n$ for each $a, b \in \mathbb{Z}$ and $n \ge 1$. It can be easily checked that $age(\mathcal{Z})$ satisfies AP, but $\aut(\mathcal{Z})$ does not satisfy FACP, since the algebraic closure of any two points is $\mathbb{Z}$. Thus Theorem \ref{t:gen} implies that $\mathcal{Z}$ cannot satisfy CSAP.
		
	\end{remark}

	\section{An application to decompositions}
	\label{s:appl}
	In this section we present an application of our results: we use Theorem \ref{t:DM} to show that a large family of automorphism groups of countable structures can be decomposed into the union of a Haar null and a meager set. 
	
	\begin{corollary}\label{c:inffixed -> decomp}
		Let $G$ be a closed subgroup of $S_\infty$ satisfying the $FACP$ and suppose that the set $F = \{g \in G : \text{$\fix(g)$ is infinite}\}$ is dense in $G$. Then $G$ can be decomposed into the union of an (even conjugacy invariant) Haar null and a meager set. 
	\end{corollary}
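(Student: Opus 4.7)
The plan is to combine Theorem \ref{t:DM} with a Baire category argument on fixed-point sets. By Theorem \ref{t:DM}, since $G$ has the $FACP$, the set
\[
\mathcal{F} = \{g \in G : g \text{ has finitely many finite orbits}\}
\]
is co-Haar null, so its complement $\mathcal{F}^c$ is Haar null. Moreover, $\mathcal{F}$ is conjugacy invariant (by Lemma \ref{l:conin}, or simply because conjugation preserves orbit structure), so $\mathcal{F}^c$ is a conjugacy invariant Haar null set. Thus the decomposition $G = \mathcal{F}^c \cup \mathcal{F}$ will prove the corollary, provided we can show that $\mathcal{F}$ itself is meager.

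To show $\mathcal{F}$ is meager, first I would observe the elementary containment
\[
\mathcal{F} \subset \{g \in G : \fix(g) \text{ is finite}\} = G \setminus F,
\]
which holds because every fixed point of $g$ constitutes a finite (length-one) orbit, so infinitely many fixed points yield infinitely many finite orbits. Hence it suffices to prove that $G \setminus F$ is meager, or equivalently, that $F$ is comeager.

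For each $n \in \omega$ define
\[
F_n = \{g \in G : |\fix(g)| \geq n\} = \bigcup_{\substack{x_1, \dots, x_n \in \omega \\ \text{distinct}}} \{g \in G : g(x_i) = x_i \text{ for all } i \leq n\}.
\]
Each of the sets appearing in this union is a basic open subset of $G$ (determined by the partial permutation fixing $n$ specified points), so $F_n$ is open in $G$. By hypothesis $F = \bigcap_n F_n$ is dense, and since $F \subseteq F_n$, each $F_n$ is dense as well. Therefore $F$ is a dense $G_\delta$ set, hence comeager, and so $G \setminus F$ is meager.

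Combining the two steps, $\mathcal{F}$ is meager while $\mathcal{F}^c$ is a conjugacy invariant Haar null set, yielding the desired decomposition $G = \mathcal{F}^c \cup \mathcal{F}$. The main ``work'' here is already in Theorem \ref{t:DM}; the only subtle point is realizing that density of $F$ automatically upgrades (via each $F_n$ being open) to comeagerness, which is essentially a soft Baire category observation rather than a genuine obstacle.
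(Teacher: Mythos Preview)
Your proof is correct and follows essentially the same approach as the paper. The only cosmetic difference is that the paper takes the decomposition $G = F \cup (G \setminus F)$ (noting $F \subset \mathcal{F}^c$ is Haar null and $G \setminus F$ is meager), whereas you take $G = \mathcal{F}^c \cup \mathcal{F}$; since $F \subset \mathcal{F}^c$ and $\mathcal{F} \subset G \setminus F$, the two choices are interchangeable.
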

	\begin{proof}
		Clearly, $F$ is conjugacy invariant, and since it can be written as $F = \{g \in G : \forall n \in \omega \ \exists m > n \ \left(g(m) = m\right) \}$, $F$ is $G_\delta$. Using the assumptions of this corollary, it is dense $G_\delta$, hence co-meager. Using Theorem \ref{t:DM}, it is Haar null, hence $F \cup (G \setminus F)$ is an appropriate decomposition of $G$. 
	\end{proof}	
	
	\begin{corollary}
		\label{c:Aut(R), Aut(Q), Aut(B_inf) decomposes}
		$\aut(\mathcal{R})$, $\aut(\mathbb{Q}, <)$ and $\aut(\mc{B}_\infty)$ can be decomposed into the union of an (even conjugacy invariant) Haar null and a meager set.
	\end{corollary}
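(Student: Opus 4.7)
The plan is to deduce the corollary directly from Corollary~\ref{c:inffixed -> decomp}: for each of the three groups $G$ one must verify (a) $G$ has the $FACP$, and (b) the set $F = \{g \in G : \fix(g) \text{ is infinite}\}$ is dense in $G$. Hypothesis (a) is already recorded in Corollary~\ref{c:fininf} (alternatively, since the ages of the three structures have at least the CSAP, Theorem~\ref{t:gen} applies), so the only real content is density of $F$.

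To establish (b), I would first reduce density to a one-step extension lemma: for every finite partial automorphism $p$ (with respect to the ambient countable structure), there exist a finite partial automorphism $p' \supseteq p$ and a point $c \notin \dom(p)$ with $p'(c) = c$. Given such a lemma, a standard back-and-forth construction, interleaving the three tasks of enlarging the domain, enlarging the range, and appending a fresh fixed point, produces an element of $F \cap [p]$ for every basic open neighbourhood $[p]$, which is precisely density of $F$.

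It then remains to verify the one-step lemma case by case. For $\aut(\mathbb{Q}, <)$ it is immediate: any rational $c$ strictly above $\max(\dom(p) \cup \ran(p))$ can be appended as a fixed point without disturbing order-preservation. For $\aut(\mathcal{R})$, the defining extension property of the random graph furnishes a vertex $v \in V \setminus (\dom(p) \cup \ran(p))$ adjacent to every element of $\dom(p) \cup \ran(p)$; then $p \cup \{(v, v)\}$ is visibly a partial isomorphism of induced subgraphs. For $\aut(\mc{B}_\infty)$, viewing $p$ as an isomorphism between two finite subalgebras $A, B \subseteq \mc{B}_\infty$, I would use atomlessness of $\mc{B}_\infty$ to pick an element $x \in \mc{B}_\infty$ that splits every atom of $A$ and every atom of $B$ into two nonzero parts; a brief atom-by-atom check then shows that $p \cup \{(x, x)\}$ extends uniquely to a Boolean algebra isomorphism $\langle A, x \rangle \to \langle B, x \rangle$.

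The Boolean algebra case is the only step I expect to require genuine care: for $\mathbb{Q}$ and $\mathcal{R}$ the fresh fixed point can be placed in a region essentially disjoint from the support of $p$, whereas a nontrivial fixed element of $\mc{B}_\infty$ necessarily interacts with every atom of both $A$ and $B$, so compatibility must be checked simultaneously across all atoms. Atomlessness of $\mc{B}_\infty$, however, provides more than enough room to accommodate the requisite $x$, so the argument should go through cleanly.
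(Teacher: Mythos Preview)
Your proposal is correct and follows essentially the same route as the paper: both reduce to Corollary~\ref{c:inffixed -> decomp} and then verify the one-step fixed-point extension lemma case by case, with identical arguments for $\aut(\mathbb{Q},<)$ and $\aut(\mathcal{R})$. For $\aut(\mathcal{B}_\infty)$ the paper's argument is a minor variant of yours---it first passes to a single partition of $\mathbf{1}$ generating both $\dom(p)$ and $\ran(p)$, then splits each atom in two and lets the new fixed point be the join of one half from each atom---but the underlying idea (use atomlessness to split all relevant atoms simultaneously) is the same.
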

	\begin{proof}
		In order to show that the set of elements in these groups with infinitely many fixed points is dense, in each case it is enough to show that if $p$ is a finite, partial automorphism then there is another partial automorphism $p'$ extending $p$ such that $p' \supset p \cup (x,x)$ with $x \not \in \dom(p)$. 
		
		For $\aut(\Q, <)$, let $x$ be greater than each element in $\dom(p) \cup \ran(p)$, then it is easy to see that $p \cup (x, x)$ is also a partial automorphism. 
		
		For $\aut(\mathcal{R})$, let $x$ be an element different from each of $\dom(p) \cup \ran(p)$ with the property that $x$ is connected to every vertex in $\dom(p) \cup \ran(p)$. Then it is easy to see that $p \cup (x, x)$ is a partial automorphism. 
		
		For $\aut(\mathcal{B}_\infty)$, let $a_0 \cup a_1 \cup \dots \cup a_{n-1}$ be a partition of $\mathbf{1}$ with the property that $\dom(p) \cup \ran(p)$ is a subset of the algebra generated by $A = \{a_0, a_1, \dots, a_{n - 1}\}$. Then there is a permutation $\pi$ of $\{0, 1, \dots, n - 1\}$ compatible with $p$, that is, $p(a_i) = a_{\pi(i)}$ for every $i$. Let us write each $a_i$ as a disjoint union $a_i = a_i' \cup a_i''$ of non-zero elements. Again, a partial permutation can be described by a permutation of the elements $\{a_1', \dots, a_n'\} \cup \{ a_1'', \dots, a_n''\}$. Hence, let $p'$ be defined by $p'(a_i') = a_{\pi(i)}'$, $p'(a_i'') = a_{\pi(i)}''$. Then $p'$ is a partial automorphism extending $p$ with a new fixed point $\bigcup_{i < n} a_i'$. 
	\end{proof}

	\section{Various behaviors}
	\label{s:poss}
	It turns out, that in natural Polish groups we may encounter very different behaviors of conjugacy classes with respect to the ideal of Haar null sets (see \cite{homeo}, \cite{autrcikk}, \cite{autqcikk}). In this section we address the questions from \ref{q:how}, namely, given a Polish group, how many non-Haar null conjugacy classes are there and decide whether the union of the Haar null classes is Haar null. Note that these questions make perfect sense in the locally compact case as well. In this section we construct a couple of examples. 
	
	If $(A,+)$ is an abelian group we will denote by $\phi$ the automorphism of $A$ defined by $a \mapsto -a$. 
	
	\begin{proposition}
		Let $(A,+)$ be an abelian Polish group such that for every $a \in A$ there exists an element $b$ with $2b=a$. Observe that  $\phi \in \aut(A)$, $\phi^2=id_A$ and $(\Z_2 \ltimes_\phi A,\cdot)$ can be partitioned into $\{0\} \times A$ and $\{1\} \times A$. Moreover, in the group $\Z_2 \ltimes_\phi A$ the conjugacy class of every element of $\{0\} \times A$ is of cardinality at most $2$, whereas the set $\{1\} \times A$ is a single conjugacy class. 
	\end{proposition}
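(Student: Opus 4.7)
The plan is to make a direct computation in the semidirect product $\Z_2 \ltimes_\phi A$, whose elements I will write as pairs $(\epsilon, a)$ with $\epsilon \in \Z_2$ and $a \in A$, and whose multiplication is
\[
(\epsilon_1, a_1)\cdot(\epsilon_2, a_2) = (\epsilon_1 + \epsilon_2,\, a_1 + \phi^{\epsilon_1}(a_2)).
\]
Before turning to conjugacy, I would first record the easy observations: $\phi\colon a \mapsto -a$ is an automorphism of $A$ (since $A$ is abelian) and $\phi^2 = \id_A$, the identity element is $(0, 0)$, the inverse of $(0, a)$ is $(0, -a)$, and a direct check using $\phi^2 = \id_A$ gives $(1, a)^{-1} = (1, a)$. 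In particular every element of $\{1\} \times A$ is an involution, which already suggests the asymmetry between the two cosets of $\{0\} \times A$ in $\Z_2 \ltimes_\phi A$.

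Next I would compute the four conjugation formulas obtained by varying $\epsilon_1, \epsilon_2 \in \Z_2$ in $(\epsilon_1, a_1)(\epsilon_2, a_2)(\epsilon_1, a_1)^{-1}$. A routine application of the multiplication rule yields
\[
(0, a_1)(0, a_2)(0, a_1)^{-1} = (0, a_2), \qquad (1, a_1)(0, a_2)(1, a_1)^{-1} = (0, -a_2),
\]
from which the conjugacy class of any $(0, a_2)$ is contained in the two-element set $\{(0, a_2), (0, -a_2)\}$ and hence has cardinality at most $2$ (it has cardinality $1$ precisely when $a_2$ is $2$-torsion).

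For the second assertion, the analogous computations produce
\[
(0, a_1)(1, a_2)(0, a_1)^{-1} = (1, 2a_1 + a_2), \qquad (1, a_1)(1, a_2)(1, a_1)^{-1} = (1, 2a_1 - a_2).
\]
Here is where the divisibility hypothesis on $A$ comes in: since for every $a \in A$ there exists $b \in A$ with $2b = a$, the map $a_1 \mapsto 2a_1$ is surjective, so already conjugations by elements of $\{0\} \times A$ sweep out all of $\{1\} \times A$ starting from any fixed $(1, a_2)$. Therefore $\{1\} \times A$ is a single conjugacy class.

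The argument is essentially bookkeeping in the semidirect product, and the only real subtlety is the use of $2$-divisibility: without it, $\{1\} \times A$ would split into the cosets of $2A$ in $A$, and the conjugacy class of $(1, a_2)$ would only be $\{1\} \times (a_2 + 2A)$. I expect no genuine obstacle beyond carefully tracking signs and the $\Z_2$-action.
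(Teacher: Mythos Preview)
Your proof is correct and follows essentially the same approach as the paper: a direct computation of conjugation in the semidirect product, followed by an appeal to $2$-divisibility to show $\{1\}\times A$ is a single class. The only cosmetic differences are that the paper uses the convention $g^{-1}xg$ rather than $gxg^{-1}$, and for the second assertion it conjugates by elements of $\{1\}\times A$ (obtaining $(1,2b-a)$) rather than by elements of $\{0\}\times A$ as you do; both routes give the result immediately.
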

	
	\begin{proof}
		Let $(0,a) \in \{0\} \times A$ and $(i,b) \in \Z_2 \ltimes_\phi A$ arbitrary. We claim that the conjugacy class of $(0,a)$ is $\{(0,a),(0,-a)\}$. If $i=0$ then $(0,a)$ and $(i,b)$ commute, so let $i=1$. By definition 
		\[(1,b)^{-1} \cdot (0,a) \cdot (1,b)=(1,b) \cdot (1,b+a)=(0,b-(b+a))=(0,-a),\]
		which shows our claim. 
		
		Now let $(1,a),(1,a') \in \Z_2 \ltimes_\phi A$ be arbitrary. Now for an arbitrary element $(1,b)$ we get
		\[(1,b)^{-1} \cdot (1,a) \cdot (1,b)=(1,b) \cdot (0,-b+a)=(1,b-(-b+a))=(1,2b-a),\]
		thus, choosing $b$ so that $2b=a'+a$ we obtain
		\[(1,b)^{-1} \cdot (1,a) \cdot (1,b)=(1,a').\]	
	\end{proof}
	\begin{corollary}
		\label{c:semidir}
		Let $A=\Z^\omega_3$ or $A=(\Q_d)^\omega$, (that is, the countable infinite power of the rational numbers with the discrete topology). Then $\Z_2 \ltimes_\phi A$ has a non-empty clopen conjugacy class, namely $\{(1,a):a \in A\}$ and every other conjugacy class has cardinality at most $2$. Hence, the union of the Haar null classes  $\{(0,a):a \in A\}$ is also non-empty clopen.
	\end{corollary}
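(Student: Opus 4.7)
The plan is to verify the hypotheses of the preceding Proposition for both choices of $A$, then read off the conjugacy class structure, and finally check that the Haar nullness and topological claims follow from elementary observations.

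First, I would check that $A = \mathbb{Z}_3^\omega$ and $A = (\mathbb{Q}_d)^\omega$ are indeed abelian Polish groups satisfying the 2-divisibility assumption of the Proposition. In $\mathbb{Z}_3$ the element $2$ is a unit (since $2 \cdot 2 = 1$), so for every $a \in \mathbb{Z}_3^\omega$ the element $b := 2a$ (componentwise) satisfies $2b = 4a = a$. For $\mathbb{Q}_d^\omega$ the statement is immediate, as $\mathbb{Q}$ is 2-divisible. Both groups are Polish: the first is compact metrizable, while the second is Polish as a countable product of countable discrete spaces. Applying the Proposition directly gives that $\{(1,a) : a \in A\}$ is a single conjugacy class and every conjugacy class contained in $\{(0,a) : a \in A\}$ has cardinality at most $2$.

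Next, the topological assertion is routine: $\mathbb{Z}_2$ carries the discrete topology, so both $\{0\}$ and $\{1\}$ are clopen in $\mathbb{Z}_2$. Since the underlying space of $\mathbb{Z}_2 \ltimes_\phi A$ is just $\mathbb{Z}_2 \times A$ with the product topology, both $\{1\} \times A$ and $\{0\} \times A$ are non-empty clopen subsets of $G := \mathbb{Z}_2 \ltimes_\phi A$.

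For the final sentence I need to show that every conjugacy class contained in $\{0\} \times A$ is Haar null; then the union of all Haar null classes is precisely $\{0\} \times A$, which has just been shown to be non-empty clopen. Since each such class has cardinality at most $2$, it suffices to exhibit, in each of the two groups, a Borel probability measure $\mu$ on $G$ that is continuous (atomless), because then $\mu(gFh) = 0$ for every finite $F$ and every $g,h \in G$. For $G = \mathbb{Z}_2 \ltimes_\phi \mathbb{Z}_3^\omega$ the normalized Haar measure works, since $G$ is an infinite compact metrizable group with no isolated points. For $G = \mathbb{Z}_2 \ltimes_\phi (\mathbb{Q}_d)^\omega$, the space $(\mathbb{Q}_d)^\omega$ is uncountable Polish (in fact homeomorphic to the Baire space), so it carries a continuous Borel probability measure; its obvious lift to $G$ (say, via $\nu \mapsto \tfrac{1}{2}(\delta_0 + \delta_1) \times \nu$) is again continuous, giving the required $\mu$. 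The only mild subtlety is remembering that one needs an atomless measure rather than merely a Haar measure in the non-locally-compact case, but this is handled by the standard construction just sketched, so I expect no real obstacle.
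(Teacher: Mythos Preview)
Your proof is correct and follows essentially the same approach as the paper, which simply states this as a corollary of the preceding Proposition without further argument. You have filled in exactly the details the paper leaves implicit: verifying 2-divisibility for both choices of $A$, noting that the $\{0\}\times A$ / $\{1\}\times A$ partition is clopen because $\mathbb{Z}_2$ is discrete, and observing that finite sets are Haar null via any atomless Borel probability measure (with the clopen set $\{1\}\times A$ automatically non-Haar null since non-empty open sets never are).
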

	\begin{lemma}
		\label{l:prod}
		Suppose that $G_1$ and $G_2$ are Polish groups and $A_1 \subset G_1$ is Borel and $U \subset G_2$ is non-empty and open. Then $A_1 \times U$ is Haar null in $G_1 \times G_2$ iff $A_1$ is Haar null. 
	\end{lemma}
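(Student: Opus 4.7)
The plan is to handle the two directions of this equivalence separately, the backward direction being essentially trivial and the forward direction requiring a covering argument based on second countability.

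For the easier direction, suppose $A_1$ is Haar null in $G_1$ with witness measure $\mu_1$. I would simply take any Borel probability measure $\mu_2$ on $G_2$ (for instance a Dirac mass at the identity) and let $\mu = \mu_1 \times \mu_2$ on $G_1 \times G_2$. For any $(g_1,g_2),(h_1,h_2) \in G_1 \times G_2$, we have $(g_1,g_2)(A_1 \times U)(h_1,h_2) \subset g_1 A_1 h_1 \times G_2$, and by Fubini the product measure of this set equals $\mu_1(g_1 A_1 h_1) \cdot \mu_2(G_2) = 0$. Thus $A_1 \times U$ is Haar null in $G_1 \times G_2$.

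For the forward direction, suppose $A_1 \times U$ is Haar null with Borel probability witness $\nu$ on $G_1 \times G_2$, so that $\nu(g_1 A_1 h_1 \times g_2 U h_2) = 0$ for every choice of $g_1,g_2,h_1,h_2$. The natural candidate is the pushforward $\mu = \pi_{1*}\nu$ under the projection onto $G_1$, which is a Borel probability measure on $G_1$. For arbitrary $g_1,h_1 \in G_1$ we then have
\[
\mu(g_1 A_1 h_1) = \nu\bigl(g_1 A_1 h_1 \times G_2\bigr).
\]
Since $G_2$ is a Polish space it is second countable, hence Lindel\"of, so the open cover $\{g_2 U : g_2 \in G_2\}$ of $G_2$ admits a countable subcover $\{g_2^{(n)} U : n \in \omega\}$. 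Then $g_1 A_1 h_1 \times G_2 = \bigcup_n (g_1 A_1 h_1 \times g_2^{(n)} U)$, and applying countable subadditivity together with the defining property of $\nu$ (taking $h_2 = e$) yields
\[
\mu(g_1 A_1 h_1) \le \sum_n \nu\bigl(g_1 A_1 h_1 \times g_2^{(n)} U\bigr) = 0,
\]
so $\mu$ witnesses that $A_1$ is Haar null.

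There is no serious obstacle here; the only point requiring a moment's care is verifying that the sets involved are Borel (which follows since $U$ is open and $A_1$ is Borel) and invoking second countability of $G_2$ to pass from the single-translate bound given by the Haar null witness to the full-fibre bound needed for the pushforward argument.
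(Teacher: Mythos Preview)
Your proof is correct and follows essentially the same approach as the paper: in the easy direction you use a product with a point mass (the paper phrases this as copying $\mu_1$ to $G_1\times\{1\}$), and in the other direction you use the pushforward under the first projection together with a countable cover of $G_2$ by translates of $U$. The only cosmetic difference is ordering---the paper first observes that countably many translates of $A_1\times U$ cover $A_1\times G_2$, hence $A_1\times G_2$ is Haar null for the same witness, and then pushes forward---but the ingredients are identical.
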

	\begin{proof}
		Suppose first that $A_1$ is Haar null witnessed by a measure $\mu_1$. Then, if $\mu'$ is the same measure copied to $G_1 \times \{1\}$, it is easy to see that $\mu'$ witnesses the Haar nullness of $A_1 \times G_2$, in particular, the Haar nullness of $A_1 \times U$.
		
		Conversely, suppose that $A_1 \times U$ is Haar null witnessed by the measure $\mu$. Clearly, as countably many translates of $U$ cover $G_2$, countably many translates of $A_1 \times U$ cover $A_1 \times G_2$, hence $A_1 \times G_2$ is Haar null as well, and this is also witnessed by the measure $\mu$. Let $\mu_1=\proj_{G_1*}\mu$, then $\mu_1$ witnesses the Haar nullness of $A_1$.
	\end{proof}	
	\begin{proposition}
		\label{p:prod}
		If $G$ is a Polish group with $\kappa$ many non-Haar null conjugacy classes then $G \times \left(\Z_2 \ltimes_\phi  \Z^\omega_3 \right)$ has $\kappa$ many non-Haar null conjugacy classes and the union of the Haar null conjugacy classes is not Haar null.
	\end{proposition}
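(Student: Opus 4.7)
The plan is to exploit the product structure of conjugacy classes in $G \times H$, where I write $H = \Z_2 \ltimes_\phi \Z_3^\omega$ for brevity. Since $(x,y)^{-1}(g,h)(x,y) = (x^{-1}gx, y^{-1}hy)$, every conjugacy class of $G \times H$ is a product $C_G \times C_H$ with $C_G, C_H$ conjugacy classes of $G, H$ respectively. Combined with Corollary \ref{c:semidir}, which describes the conjugacy classes of $H$ as the non-empty clopen class $U = \{(1,a) : a \in \Z_3^\omega\}$ together with a family of classes of cardinality at most $2$ partitioning the clopen set $\{0\} \times \Z_3^\omega$, this reduces both assertions to deciding when such a product class is Haar null.

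For the first assertion I would first treat the case $C_H = U$: since $U$ is non-empty open in $H$, Lemma \ref{l:prod} directly yields that $C_G \times U$ is Haar null if and only if $C_G$ is Haar null in $G$. Next I would show that whenever $|C_H| \le 2$, the class $C_G \times C_H$ is Haar null for \emph{every} Borel $C_G$. The key point is that $H$ is compact (a semidirect product of $\Z_2$ and the compact group $\Z_3^\omega$), so it carries a normalized Haar probability measure $\mu_H$, and the product measure $\delta_{e_G} \times \mu_H$ witnesses Haar nullness because $\mu_H(h_1 C_H h_2) = 0$ for all $h_1, h_2 \in H$ while the $G$-factor is a Dirac mass and so does not affect any translate. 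Combining these two observations, the non-Haar null conjugacy classes of $G \times H$ are exactly the sets $C_G \times U$ with $C_G$ a non-Haar null conjugacy class of $G$, giving $\kappa$ of them.

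For the second assertion, the above analysis shows that every conjugacy class of $G \times H$ meeting the clopen set $G \times (\{0\} \times \Z_3^\omega)$ has the form $C_G \times C_H$ with $|C_H| \le 2$ and is hence Haar null, so the union of the Haar null conjugacy classes contains $G \times (\{0\} \times \Z_3^\omega)$. Since a non-empty open subset of a Polish group is never Haar null (by separability and Lindel\"ofness, $G \times H$ can be covered by countably many left translates of such a set, while $G \times H$ itself is not Haar null), this union is not Haar null. I expect the main subtle point to be the Haar nullness of $C_G \times C_H$ when $|C_H| \le 2$ and $C_G$ is possibly all of $G$: it is essential that $H$ is compact so that finite sets have $\mu_H$-measure zero, and that using a Dirac factor on the $G$-coordinate lets the $\mu_H$-factor absorb every bi-translation $(g_1,h_1)(\cdot)(g_2,h_2)$ without imposing any condition on $C_G$.
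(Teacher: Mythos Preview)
Your proposal is correct and follows essentially the same route as the paper. Both arguments note that conjugacy classes of $G\times H$ are products $C_G\times C_H$, handle $C_H=U$ via Lemma~\ref{l:prod}, and for the second assertion observe that the union of the Haar null classes contains the non-empty clopen set $G\times(H\setminus U)$. The only cosmetic difference is in the treatment of the finite classes $C_H$: the paper appeals to Lemma~\ref{l:prod} with the roles of the factors swapped (using that finite sets are Haar null in the uncountable group $H$), whereas you unpack that step by writing down the witness $\delta_{e_G}\times\mu_H$ explicitly, exploiting the compactness of $H$. These are the same argument at different levels of detail.
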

	\begin{proof}
		Clearly, the conjugacy classes of $G \times \left(\Z_2 \ltimes_\phi  \Z^\omega_3 \right)$ are of the form $C_1 \times C_2$ where $C_1$ is a conjugacy class in $G$ and $C_2$ is a conjugacy class in $\Z_2 \ltimes_\phi  \Z^\omega_3$. By Corollary \ref{c:semidir} we have that every conjugacy class in the latter group is finite with one exception, this exceptional conjugacy class is clopen; let us denote it by $U$. Now, since the finite sets are Haar null in $\Z_2 \ltimes_\phi  \Z^\omega_3$ by Lemma \ref{l:prod}, the set of non-Haar null conjugacy classes in $G \times \left(\Z_2 \ltimes_\phi  \Z^\omega_3\right)$ is equal to $\{C \times U:C \text{ is a non-Haar null conjugacy class in } G\}$, hence the cardinality of the non-Haar null classes is $\kappa$. Moreover, the union of the Haar null conjugacy classes contains $G \times ((\Z_2 \ltimes_\phi  \Z^\omega_3) \setminus U)$, which is non-empty and open, consequently it is not Haar null.
		
	\end{proof}
	Finally, we would like to recall the following well known theorem.
	\begin{theorem}[HNN extension, \cite{higman1949embedding}]\label{t:hnn} There exists a countably infinite group with two conjugacy classes. 
	\end{theorem}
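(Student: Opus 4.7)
The plan is to realize a two-conjugacy-class countable group as a direct limit of HNN extensions. Start from any countable torsion-free group, for concreteness $G_0 = \mathbb{Z}$, and recursively build an ascending chain $G_0 \le G_1 \le G_2 \le \cdots$ of countable torsion-free groups with the property that any two non-identity elements of $G_n$ become conjugate in $G_{n+1}$. Then $G = \bigcup_n G_n$ is a countable group in which the identity is alone in its conjugacy class, while any two non-identity elements lie in some common $G_n$ and are therefore conjugate in $G_{n+1} \subseteq G$; so $G$ has exactly the two classes $\{1\}$ and $G \setminus \{1\}$.

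To pass from $G_n$ to $G_{n+1}$, I would perform an inner induction. Enumerate the pairs of non-identity elements of $G_n$ as $(a_i, b_i)$ for $i \in \omega$, and build a chain $G_n = H_0 \le H_1 \le \cdots$ where $H_{i+1}$ is the HNN extension of $H_i$ with a single stable letter $t_i$ realising the isomorphism $\langle a_i \rangle \to \langle b_i\rangle$ that sends $a_i$ to $b_i$. This isomorphism exists precisely because $H_i$ is torsion-free, so both cyclic subgroups are infinite cyclic. Setting $G_{n+1} = \bigcup_i H_i$ then forces each originally listed pair from $G_n$ to become conjugate via some $t_i \in G_{n+1}$.

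Two classical facts about HNN extensions underpin this construction. First, the HNN embedding theorem guarantees $H_i \hookrightarrow H_{i+1}$, so no collapsing occurs and in particular the identity remains distinguished from every non-identity element. Second, by Britton's lemma every torsion element of an HNN extension is conjugate into the base group, so torsion-freeness is preserved at each step; thus torsion-freeness propagates through the entire chain, keeping the cyclic subgroups generated by non-identity elements isomorphic at every subsequent stage. The main technical obstacle is exactly this inductive verification that torsion-freeness and injectivity of the chain maps persist; granted Britton's normal form for HNN extensions, both are routine.

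Finally one checks countability: each $H_i$ is obtained from a countable group by adjoining one generator with finitely many relations, so remains countable, and countable unions of countable groups are countable. Hence $G$ is a countably infinite group in which $G \setminus \{1\}$ is a single conjugacy class, completing the construction.
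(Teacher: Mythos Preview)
The paper does not prove this theorem; it simply cites it as a well-known result from \cite{higman1949embedding} and uses it as a black box to populate the table of examples. Your sketch is correct and is precisely the classical construction: a tower of HNN extensions over a torsion-free base, with Britton's lemma ensuring torsion-freeness (and hence the isomorphism $\langle a_i\rangle\cong\langle b_i\rangle$ needed at each step) propagates through the chain, so nothing further is required.
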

	We denote such a group by HNN, and consider it as a discrete Polish group. 
	
	Combining Proposition \ref{p:prod}, Corollaries \ref{c:autqcont}, \ref{c:autrcont}, \ref{c:semidir}, Lemma \ref{l:prod} and Theorems \ref{t:DMold} and \ref{t:hnn} we obtain Table \ref{tab:1} (see the end of Section \ref{s:results}).  (Recall that $C$, $LC \setminus C$ and $NLC$ stand for compact, locally compact non-compact,  and non-locally compact, respectively.)

	\section{Open problems}
	\label{s:open}
	We finish with a couple of open questions. In Section \ref{s:poss} we produced several groups with various numbers of non-Haar null conjugacy classes. However, our examples are somewhat artificial. 
	
	\begin{question} Are there natural examples of automorphism groups with given cardinality of non-Haar null conjugacy classes?
	\end{question}
	
	The following question is maybe the most interesting one from the set theoretic viewpoint. 
	\begin{question} Suppose that a Polish group has uncountably many non-Haar null conjugacy classes. Does it have continuum many non-Haar null conjugacy classes?
		
	\end{question}
	
	The answer is of course affirmative under e.g.~the Continuum Hypothesis.
	Since the definition of Haar null sets is complicated (the collection of non-Haar null closed sets can already be $\mathbf{\Sigma}^1_1$-hard and $\mathbf{\Pi}^1_1$-hard \cite{openlyhaarnull}), it is unlikely that this question can be answered with an absoluteness argument. 
	
	The characterization result of Section \ref{s:genres} and the similarity between Theorems \ref{t:qintro} and \ref{t:randomintro} suggest that a general theory of the behavior of the random automorphism (similar to the one built by Truss, Kechris and Rosendal) could exist. 
	
	\begin{problem} Formulate necessary and sufficient model theoretic conditions which characterize the measure theoretic behavior of the conjugacy classes.
	\end{problem}
	
	In particular, it would be very interesting to find a unified proof of the description of the non-Haar null classes of $\aut(\Q, <)$ and $\aut(\mc{R})$. 
	
	\bigskip
	\textbf{Acknowledgements.} We would like to thank to R. Balka, Z. Gyenis, A. Kechris, C. Rosendal, S. Solecki and P. Wesolek for many valuable remarks and discussions. We are also grateful to the anonymous referee for their comments and suggestions, particularly for pointing out a simplification of the proof of Lemma \ref{l:conin}.
	
	\bibliographystyle{apalike}
	
	\bibliography{ran}

\end{document}